\providecommand{\U}[1]{\protect\rule{.1in}{.1in}}
\theoremstyle{plain}
\newtheorem{theorem}{Theorem}[section]
\newtheorem{corollary}[theorem]{Corollary}
\newtheorem{lemma}[theorem]{Lemma}
\newtheorem{proposition}[theorem]{Proposition}
\theoremstyle{definition}
\newtheorem{remark}[theorem]{Remark}
\numberwithin{equation}{section}
\numberwithin{theorem}{section}
\let\pdfoutput=\undefined\fi
\begin{document}

\title{Uniqueness and sign properties of minimizers in a quasilinear indefinite
problem \thanks{2020 \textit{Mathematics Subject Classification}. 35J25,
35J62, 35J92.} \thanks{\textit{Key words and phrases}. quasilinear,
indefinite, sublinear, uniqueness.} }
\author{Uriel Kaufmann\thanks{FaMAF-CIEM (CONICET), Universidad Nacional de
C\'{o}rdoba, Medina Allende s/n, Ciudad Universitaria, 5000 C\'{o}rdoba,
Argentina. \textit{E-mail address: }kaufmann@mate.uncor.edu} , Humberto Ramos
Quoirin \thanks{CIEM-FaMAF, Universidad Nacional de C\'{o}rdoba, (5000)
C\'{o}rdoba, Argentina. \textit{E-mail address: }humbertorq@gmail.com} ,
Kenichiro Umezu\thanks{Department of Mathematics, Faculty of Education,
Ibaraki University, Mito 310-8512, Japan. \textit{E-mail address:
}kenichiro.umezu.math@vc.ibaraki.ac.jp}
\and \noindent}
\maketitle

\begin{abstract}
Let $1<q<p$ and $a\in C(\overline{\Omega})$ be sign-changing, where $\Omega$
is a bounded and smooth domain of $\mathbb{R}^{N}$. We show that the
functional
\[
I_{q}(u):=\int_{\Omega}\left(  \frac{1}{p}|\nabla u|^{p}-\frac{1}%
{q}a(x)|u|^{q}\right)  ,
\]
has exactly one nonnegative minimizer $U_{q}$ (in $W_{0}^{1,p}(\Omega)$ or
$W^{1,p}(\Omega)$). In addition, we prove that $U_{q}$ is the only possible
\textit{positive} solution of the associated Euler-Lagrange equation, which
shows that this equation has at most one positive
solution. Furthermore, we show that if $q$ is close enough to $p$ then $U_{q}$
is positive, which also guarantees that minimizers of $I_{q}$ do not change
sign. Several of these results are new even for $p=2$.

\end{abstract}


\section{Introduction}

Let $\Omega$ be a bounded and smooth domain of $\mathbb{R}^{N}$ with $N\geq1$.
This note is concerned with the problem
\[
\left\{
\begin{array}
[c]{lll}%
-\Delta_{p}u=a(x)u^{q-1} & \mathrm{in} & \Omega,\\
u\geq0 & \mathrm{in} & \Omega,\\
\mathbf{B}u=0 & \mathrm{on} & \partial\Omega,
\end{array}
\right.  \leqno{(P_q)}
\]
where $\Delta_{p}$ is the $p$-Laplacian operator. Here $a\in C(\overline
{\Omega})$ changes sign and $q\in(1,p)$ (which is known as the $p$-sublinear
or $p$-subhomogeneous case).

We consider either Dirichlet ($\mathbf{B }u=u$) or Neumann ($\mathbf{B
}u=\partial_{\nu}u$, where $\nu$ is the outward unit normal to $\partial
\Omega$) homogeneous boundary conditions. In the Neumann case, we assume
throughout this note that $\int_{\Omega}a<0$, which is a necessary condition
for the existence of a positive solution of $(P_{q})$.

By a \textit{solution} of $(P_{q})$ we mean a nonnegative weak solution, i.e.
$u\in X$ such that $u\geq0$ in $\Omega$ and
\[
\int_{\Omega}|\nabla u|^{p-2}\nabla u\nabla\phi=\int_{\Omega}a(x)u^{q-1}\phi,
\]
for all $\phi\in X$, where $X=W_{0}^{1,p}(\Omega)$ in the Dirichlet case, and
$X=W^{1,p}(\Omega)$ in the Neumann case. Since $a$ is bounded, by standard
regularity for quasilinear elliptic equations \cite{db,L}, we know that $u\in
C^{1,\alpha}(\overline{\Omega})$ for some $\alpha\in(0,1)$. If, in addition,
$u>0$ in $\Omega$, then we call it a \textit{positive solution} of $(P_{q})$.

One of the main features of $(P_{q})$ under the current conditions on $a$ and
$q$ is the possible existence of nontrivial \textit{dead core} solutions, i.e.
solutions vanishing in open subsets of $\Omega$ (see \cite{BPT,KRQU16} for
examples when $p=2$). On the
other hand, this phenomenon does not occur when $a\geq0$ or $q\geq p$, as in
this case the strong maximum principle \cite{Va} yields that any nontrivial
solution of $(P_{q})$ is positive and, by the Hopf lemma, it satisfies
$\partial_{\nu}u(x)<0$ for every $x\in\partial\Omega$ such that $u(x)=0$.

The existence of a nontrivial solution of $(P_{q})$ is not difficult to
establish, either by variational arguments or by the sub-supersolutions
method, while the existence of positive solutions is far more involved, even
for $p=2$. We shall focus here on a variational approach. Thanks to the
homogeneity in both sides of the equation, $(P_{q})$ can be tackled by several
minimization techniques (not only for $1<q<p$, but also for $p<q<p^{\ast}$,
where $p^{\ast}$ is the critical Sobolev exponent).
For $1<q<p$ we shall exploit two of them, namely, global and constrained
minimization, which we describe in the sequel. Let $I_{q}$ be the functional
given by
\[
I_{q}(u):=\int_{\Omega}\left(  \frac{1}{p}|\nabla u|^{p}-\frac{1}%
{q}a(x)|u|^{q}\right)  ,
\]
for $u\in X$. One may easily check that $I_{q}$ has a minimizer $U\geq0$,
which solves then $(P_{q})$, and satisfies $U>0$ in $\Omega_{a}^{+}$, where
\[
\Omega_{a}^{+}:=\{x\in\Omega:a(x)>0\}.
\]
We call such $U$ a \textit{ground state} (or least energy) solution of
$(P_{q})$. Alternatively, one can find a nonnegative minimizer of
$\int_{\Omega}|\nabla u|^{p}$ over the $C^{1}$ manifold
\[
\mathcal{S}_{a}:=\left\{  u\in X:\int_{\Omega}a(x)|u|^{q}=1\right\}  .
\]
By the Lagrange multipliers rule, this minimizer solves $(P_{q})$, up to some
rescaling constant. We shall see in Lemma \ref{l1} that these minimization
procedures are equivalent, i.e. they provide the same solutions. Furthermore,
these solutions turn out to be only one, cf. \cite[Theorem 1.1]{KLP}. On the
other hand, an application of a generalized Picone's inequality
\cite[Proposition 2.9]{BF} shows that this solution is the only possible
\textit{positive} solution of $(P_{q})$. More precisely:

\begin{theorem}
\label{t} For any $1<q<p$ there exists exactly one ground state solution
$U_{q}$, which is the only solution of $(P_{q})$ such that $U_{q}>0$ in
$\Omega_{a}^{+}$. In particular, $(P_{q})$ has at most one positive solution.
\end{theorem}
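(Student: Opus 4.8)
The plan is to establish Theorem~\ref{t} in three stages: existence, uniqueness of the ground state, and the ``at most one positive solution'' property. First I would recall that the direct method gives a nonnegative minimizer $U_q$ of $I_q$ on $X$: the functional is coercive because $1<q<p$ (the gradient term dominates the subhomogeneous term via the Gagliardo--Nirenberg/Sobolev and Young inequalities) and weakly lower semicontinuous, so a minimizer exists; replacing $u$ by $|u|$ does not increase $I_q$, so we may take $U_q\ge 0$, and it solves $(P_q)$ by the Euler--Lagrange equation. Testing the equation with a nonnegative function supported in $\Omega_a^+$ together with the strong maximum principle of \cite{Va} gives $U_q>0$ in $\Omega_a^+$. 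The equivalence with the constrained minimization over $\mathcal S_a$ is Lemma~\ref{l1}, which I would invoke directly.

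For uniqueness, the key tool is the generalized Picone inequality of \cite[Proposition~2.9]{BF}. Suppose $u,v$ are two ground state solutions (or, for the last part, any solution $u$ with $u>0$ in $\Omega_a^+$ and a ground state $v$). The idea is to plug the admissible test function
\[
\phi = \frac{v^{q}}{(u+\varepsilon)^{q-1}}\in X
\]
(with $\varepsilon>0$ to avoid the dead core of $u$) into the weak formulation of the equation for $u$, and symmetrically a function built from $u$ into the equation for $v$, then subtract. Picone's inequality controls $\int_\Omega |\nabla u|^{p-2}\nabla u\,\nabla\bigl(v^q/(u+\varepsilon)^{q-1}\bigr)$ from above by $\int_\Omega |\nabla v|^{p-2}\nabla v\,\nabla v^{\,q-p+1}\cdots$ — more precisely it yields, after letting $\varepsilon\to 0$ (justified since $v$ vanishes wherever $u$ does, because $u>0$ on $\Omega_a^+\supseteq\{v>0\}$'s positivity set and a.e.\ sign considerations pin down the support), an inequality of the form
\[
\int_\Omega a\,v^{q} \le \int_\Omega |\nabla v|^{p-2}\nabla v\cdot\nabla v = \int_\Omega a\,v^{q},
\]
forcing equality in Picone, which holds iff $u$ and $v$ are proportional. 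Homogeneity of degree considerations ($-\Delta_p$ is $(p-1)$-homogeneous while the right side is $(q-1)$-homogeneous with $q-1\ne p-1$) then force the proportionality constant to be $1$, so $u\equiv v$.

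The main obstacle I anticipate is the dead-core issue: $u$ and $v$ need not be strictly positive in $\Omega$, so the Picone test function is only formally admissible and one must regularize with $\varepsilon$ and carefully pass to the limit, checking that $v/(u+\varepsilon)$ stays bounded where it matters — this rests on showing $\{v>0\}\subseteq\{u>0\}$, which follows from $U_q>0$ on $\Omega_a^+$ and the fact that any solution must vanish on a neighborhood of $\Omega_a^-$'s ``core'' only in a controlled way; equivalently one uses that on the open set $\{u=0\}$ one has $a\le 0$ (from the equation) so $v$ cannot be positive there either without violating its own equation. Handling these support inclusions rigorously, and the borderline integrability of the test function near $\partial\{u>0\}$, is where the real work lies; the algebraic manipulation with Picone's inequality is then essentially forced. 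For the Neumann case the same argument applies verbatim, the condition $\int_\Omega a<0$ only being needed to ensure the ground state is nontrivial.
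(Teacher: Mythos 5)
Your overall architecture (direct method for existence, generalized Picone inequality of \cite{BF} for uniqueness) is in the right spirit, but the uniqueness step as you sketch it has two genuine gaps. First, your support inclusion $\{v>0\}\subseteq\{u>0\}$ is justified by the claim that $v$ cannot be positive on the open set where $u=0$ because $a\le 0$ there; this is false. Where $a\le 0$ a nonnegative solution satisfies $-\Delta_p v=a(x)v^{q-1}\le 0$, i.e.\ it is a subsolution of the $p$-Laplace equation, and subsolutions are perfectly free to be positive; solutions typically remain positive on part of $\{a<0\}$ before a dead core (if any) appears. Two solutions that are both positive on $\Omega_a^+$ could a priori have different dead cores, so neither support inclusion is available. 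Second, even granting the passage to the limit $\epsilon\to 0$, the symmetrized Picone--H\"older chain only yields equality in Picone, hence $\nabla(v/u)=0$ \emph{componentwise} on the common positivity set $\{u>0\}\cap\{v>0\}$, with a possibly different proportionality constant on each connected component; since this set can be disconnected, the homogeneity argument does not immediately force all constants to equal $1$, and you have not shown the positivity sets coincide. This equality-case analysis is precisely the delicate point your plan defers to "where the real work lies," and it is where the argument stalls.

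The paper sidesteps both difficulties by splitting the proof differently. It first proves (Proposition \ref{um}) that the \emph{constrained} problem $m=\inf\{\int_\Omega|\nabla v|^p:v\in\mathcal{S}_a\}$ has a unique nonnegative minimizer $V_q$, using not Picone but the hidden-convexity inequality of Lemma \ref{in} (from \cite{KLP}) applied to $W=\bigl(\tfrac{1}{2}(V_1^q+V_2^q)\bigr)^{1/q}$ --- an ingredient entirely absent from your plan. Then, for an arbitrary solution $u>0$ in $\Omega_a^+$, it tests the equation with $V_q^q/(u+\epsilon)^{q-1}$, applies Picone and H\"older \emph{one-sidedly}, and uses $a\le 0$ on $\Omega\setminus\Omega_u^+$ to get $\int_{\Omega_u^+}a V_q^q\ge 1$; this gives $\int_\Omega|\nabla\tilde{u}|^p\le m$, so $\tilde{u}$ is itself a constrained minimizer, hence $\tilde{u}\equiv V_q$ by the uniqueness already established, and homogeneity fixes the normalization. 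The point is that \emph{minimality} replaces the equality-case analysis of Picone, so no comparison of dead cores between two unknown solutions is ever needed. To complete your route you would need either to import the convexity lemma, or to carry out the support and equality-case analysis rigorously --- the latter being considerably harder than the paper's argument.
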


Uniqueness results for positive solutions of sublinear type problems have a
long history, since the well-known paper by Brezis and Oswald \cite{BO}, which
applies in the Dirichlet case to $(P_{q})$ if $p=2$ and $a>0$ in $\Omega$.
This result was extended to $p\neq2$ by D\'{\i}az and Saa \cite{DS} (see also
\cite{belloni, drabek} and its references). The indefinite case, i.e. with $a$
sign-changing, has received less attention. To the best of our knowledge, this
case has been considered only for $p=2$. Assuming that $\Omega_{a}^{+}$ is
smooth and has a finite number of connected components, Bandle et al proved
Theorem \ref{t} for the Dirichlet problem \cite[Theorem 2.3]{bandle}, and the
uniqueness of a solution positive on $\overline{\Omega_{a}^{+}}$ for the
Neumann problem \cite[Lemma 3.1]{BPT}. Still for $p=2$, Delgado and Suarez
\cite[Theorem 2.1]{DS} extended the uniqueness results for the Dirichlet case
without any assumptions on $\Omega_{a}^{+}$. Let us note that
\cite{bandle,BPT,DS} deal with more general nonlinearities (not necessarily
powerlike), and their uniqueness results are based on a change of variables
and the strong maximum principle.

The uniqueness of positive solution for $(P_{q})$ derived from Theorem \ref{t}
confirms a striking difference (known when $p=2$) with the case $p<q<p^{\ast}$
, where a high number of positive solutions may be obtained in accordance with
the number of connected components of $\Omega_{a}^{+}$, cf. \cite{BGH}. We are
not aware of an extension of this multiplicity result to $p\neq2$. Note also
that the condition '$u>0$ in $\Omega_{a}^{+}$' is sharp in the uniqueness
statement, for $(P_{q})$ may have multiple solutions that are positive in some
connected component of $\Omega_{a}^{+}$, as shown in \cite{bandle} for $p=2$.
We also extend this uniqueness feature to solutions that are positive in a
prescribed number of connected components of $\Omega_{a}^{+}$ and vanish in
the remaining ones (see Proposition \ref{ru}).

Let us consider now minimizers of $I_{q}$ in general (not only nonnegative
ones). When $\Omega_{a}^{+}$ is connected, every such minimizer has constant
sign, cf. \cite[Theorem 1.2]{KLP}, so that $\pm U_{q}$ are the only minimizers
of $I_{q}$. However, when $\Omega_{a}^{+}$ is disconnected this is no longer
true. An example of a sign-changing minimizer of $I_{q}$ is given in
\cite[Example 6.3]{KLP} for $q=1$ and $p=2$ (see also Remark \ref{rm} below
for an example with $1<q<p=2$). On the other hand, minimizers of $I_{q}$ have
constant sign whenever $U_{q}>0$ in $\Omega$. Indeed, since $|U|$ minimizes
$I_{q}$ whenever $U$ does, by Theorem \ref{t}, we have $|U|\equiv U_{q}$. Thus
$U$ does not change sign if $U_{q}>0$. This occurs when $q=p$ (in which case
$U_{q}$ has to be understood as a positive eigenfunction of $(P_{q})$), thanks
to the strong maximum principle. By some sort of continuity, this property
holds also for $q$ close to $p$:

\begin{theorem}
\label{ts} Given $a \in C(\overline{\Omega})$ there exists $q_{0}=q_{0}(a)
\in(1,p)$ such that any minimizer of $I_{q}$ has constant sign and $U_{q}$ is
the only positive solution of $(P_{q})$ for $q \in(q_{0},p)$.
\end{theorem}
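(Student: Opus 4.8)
The plan is to argue by contradiction: suppose there is a sequence $q_n \to p^-$ such that $U_{q_n}$ (the unique ground state from Theorem~\ref{t}) vanishes somewhere in $\Omega$, i.e.\ $U_{q_n}$ is not positive. The strategy is to pass to the limit and obtain a nonnegative eigenfunction of the $p$-Laplacian associated with the weight $a$ which vanishes at an interior point, contradicting the strong maximum principle. The first step is therefore a compactness argument. Working on the constraint manifold $\mathcal{S}_a$ (which is legitimate by Lemma~\ref{l1}), one normalizes so that $\int_\Omega a|U_{q_n}|^{q_n} = 1$ and $\int_\Omega |\nabla U_{q_n}|^p = \lambda_{q_n}$, the corresponding minimal value. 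I would first show $\lambda_{q_n}$ stays bounded (above and below away from $0$): an upper bound comes from plugging a fixed admissible test function into the quotient and using $q_n \to p$; a lower bound comes from the fact that $\mathcal{S}_a \neq \emptyset$ requires $\|u\|$ bounded below via the continuous (compact) embedding $W^{1,p} \hookrightarrow L^p$ together with $\|a\|_\infty$. Hence $(U_{q_n})$ is bounded in $W^{1,p}(\Omega)$, and up to a subsequence $U_{q_n} \rightharpoonup U_*$ weakly in $W^{1,p}$, strongly in $L^r$ for $r<p^*$ (in particular in $L^p$), and a.e.

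The second step is to identify the limit. Since $q_n \to p$ and $U_{q_n}$ is bounded in $L^{p^*}$ (Sobolev), the sequence $|U_{q_n}|^{q_n}$ converges to $|U_*|^p$ in $L^1$ (uniform integrability plus a.e.\ convergence, via a Vitali-type argument using the $L^{p^*/p}$-bound on $|U_{q_n}|^{q_n}$), so the constraint passes to the limit: $\int_\Omega a|U_*|^p = 1$, and in particular $U_* \not\equiv 0$. Standard lower semicontinuity gives $\int_\Omega |\nabla U_*|^p \le \liminf \lambda_{q_n} =: \lambda_*$. One then checks $\lambda_* = \lambda_p$, the first eigenvalue $\inf\{\int|\nabla u|^p : \int a|u|^p = 1\}$: the inequality $\liminf \lambda_{q_n} \ge \lambda_p$ follows from lower semicontinuity applied to any minimizing-type comparison, and $\limsup \lambda_{q_n} \le \lambda_p$ follows by testing $\lambda_{q_n}$ against a fixed eigenfunction $\varphi_p$ of the limit problem (after rescaling it onto $\mathcal{S}_a$ for the exponent $q_n$, which is possible since $\int_\Omega a|\varphi_p|^{q_n} \to \int_\Omega a|\varphi_p|^p = 1 > 0$ for $n$ large). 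Consequently $U_*$ achieves $\lambda_p$, so after rescaling it is a nonnegative eigenfunction, hence by the strong maximum principle and Hopf's lemma (as quoted in the introduction for the case $q \ge p$) it is strictly positive in $\Omega$; moreover $U_{q_n} \to U_*$ strongly in $W^{1,p}$ because $\int|\nabla U_{q_n}|^p \to \int|\nabla U_*|^p$ together with weak convergence in the uniformly convex space $W^{1,p}$.

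The third step converts the strong $W^{1,p}$ convergence into uniform convergence: by the $C^{1,\alpha}(\overline{\Omega})$ a priori estimates for the equations $-\Delta_p U_{q_n} = a U_{q_n}^{q_n-1}$ (Lieberman/DiBenedetto, with right-hand sides bounded in $L^\infty$ uniformly in $n$ since the $U_{q_n}$ are uniformly bounded by Moser iteration or by the same $C^{1,\alpha}$ bound bootstrapped), the sequence $U_{q_n}$ is bounded in $C^{1,\alpha}(\overline{\Omega})$, hence precompact in $C^1(\overline{\Omega})$, so $U_{q_n} \to U_*$ in $C^1(\overline{\Omega})$. Since $U_* > 0$ in $\Omega$ (and, in the Dirichlet case, $\partial_\nu U_* < 0$ on $\partial\Omega$ by Hopf, while in the Neumann case $U_* > 0$ on $\overline\Omega$), uniform convergence forces $U_{q_n} > 0$ in $\Omega$ for all large $n$ — contradicting our assumption. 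This establishes the existence of $q_0 = q_0(a)$ such that $U_q > 0$ in $\Omega$ for $q \in (q_0, p)$. Finally, as explained in the paragraph preceding the statement, once $U_q > 0$ in $\Omega$ we get from Theorem~\ref{t} that $|U| \equiv U_q$ for every minimizer $U$ of $I_q$, so every minimizer has constant sign; and $U_q$ being the unique solution positive on $\Omega_a^+$ (again Theorem~\ref{t}), together with $U_q > 0$ in $\Omega$, yields that $U_q$ is the unique positive solution of $(P_q)$.

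I expect the main obstacle to be the lower bound $\liminf_{n} \lambda_{q_n} \ge \lambda_p$ combined with ensuring the limit constraint is nondegenerate — i.e.\ that $U_* \not\equiv 0$ and that no mass is lost in passing $q_n \to p$ in the sublinear term $\int a |U_{q_n}|^{q_n}$. This requires a careful uniform-integrability argument (the exponents $q_n < p < p^*$ help, giving equi-integrability of $|U_{q_n}|^{q_n}$ in $L^1$ from the $W^{1,p}$-bound) and a matching of the variational levels $\lambda_{q_n}$ with $\lambda_p$ from both sides; the ``$\limsup \le$'' direction is where one must be slightly careful that a fixed first eigenfunction of the limiting weighted problem can indeed be rescaled to lie on $\mathcal{S}_a$ for the moving exponent $q_n$, which needs $\int_\Omega a|\varphi_p|^{q_n} > 0$ for large $n$ — true by dominated convergence since the limit is $1$.
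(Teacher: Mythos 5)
Your strategy is correct and reaches the right conclusion, but it is organized differently from the paper's proof (Proposition \ref{cs}). The paper argues by contradiction with the \emph{unnormalized} global minimizers $u_n=U_{q_n}$: after rescaling $a$ so that $\lambda_1(a)<1$, it splits into the cases $(u_n)$ bounded (excluded because the limit would force $\lambda_1(a)=1$) and $(u_n)$ unbounded, and in the latter case renormalizes $v_n=u_n/\Vert u_n\Vert$ and analyzes the behavior of $\Vert u_n\Vert^{p-q_n}$ before landing on a limit eigenfunction in $\mathcal{P}^{\circ}$. You instead work throughout with the constrained minimizers on $\mathcal{S}_a$, so the sequence is automatically bounded once you show the minimal levels $\lambda_{q_n}=m(q_n)$ stay bounded; the price is that you must match $\lambda_{q_n}$ with $\lambda_1(a)$ from both sides and pass the constraint $\int_\Omega a|V_{q_n}|^{q_n}=1$ to the limit (your Vitali argument handles this, and it is also what guarantees $V_*\not\equiv 0$ without the paper's comparison with $I_{q_n}(\phi_1)$). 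Both proofs end the same way: the limit is a nonnegative nontrivial solution of the limiting eigenvalue problem, hence lies in $\mathcal{P}^{\circ}$ by the strong maximum principle and Hopf's lemma, and $C^{1}$ convergence (from the uniform $C^{1,\alpha}$ estimates) contradicts $U_{q_n}\notin\mathcal{P}^{\circ}$, since $U_q$ and $V_q$ are positive multiples of one another. Your deduction of the two assertions of the theorem from Theorem \ref{t} is exactly the paper's.

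One point needs repair. In the Neumann case, boundedness of $\int_\Omega|\nabla V_{q_n}|^{p}$ does \emph{not} by itself give boundedness of $V_{q_n}$ in $W^{1,p}(\Omega)$, and the justification you offer (that $\Vert u\Vert$ is bounded \emph{below} on $\mathcal{S}_a$) addresses the wrong direction. You need a uniform-in-$q$ version of the coercivity claim in the proof of Lemma \ref{l1}-(1), namely a constant $C_1>0$ independent of $q$ near $p$ such that $\int_\Omega|\nabla u|^{p}\geq C_1\Vert u\Vert^{p}$ whenever $\int_\Omega a|u|^{q}\geq 0$; this follows from the same contradiction argument using $\int_\Omega a<0$ (or, alternatively, by decomposing $V_{q_n}$ into its mean plus a zero-mean part and ruling out an unbounded mean via the constraint). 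With that supplement the proof is complete.
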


We point out that the first assertion in Theorem \ref{ts} seems to be new even
for $p=2$. It can be considered as an extension of the fact that the first
positive eigenvalue of
\[
\left\{
\begin{array}
[c]{lll}%
-\Delta_{p}u=\lambda a(x)|u|^{p-2}u & \mathrm{in} & \Omega,\\
\mathbf{B}u=0 & \mathrm{on} & \partial\Omega,
\end{array}
\right.
\]
is \textit{principal}, i.e., its eigenfunctions have constant sign. As for the
second assertion, it extends (together with Theorem \ref{t}) to $p\neq2$ some
of the results in \cite[Theorem 1.2]{KRQU3}. To the best of our knowledge,
apart from \cite{bams} where the one-dimensional Dirichlet problem is
considered, this is the first result (in the sublinear and indefinite case) on
the existence of a \textit{positive} solution of $(P_{q})$ with $p\neq2$, for
both Dirichlet and Neumann boundary conditions. For the case $p=2$ we refer to
\cite{KRQU16,KRQU3} and references therein.

\begin{remark}
\strut

\begin{enumerate}
\item For the sake of simplicity, we have assumed that $a\in C(\overline
{\Omega})$. However, our results hold also if $a\in L^{\infty}(\Omega)$. In
this case we set $\Omega_{a}^{+}$ as the largest open set where $a>0$ a.e.

\item Since the proof of Theorem \ref{t} does not rely on the strong maximum
principle, it holds more generally if $\Omega$ is a bounded domain (not
necessarily smooth). In this way, we also improve (in the powerlike case) the
uniqueness results in \cite{DS}, where $\Omega$ is assumed to be smooth, and
\cite{bandle,BPT}, where $\Omega_{a}^{+}$ is required to be smooth and to have
finitely many connected components.

\item We believe that for $q$ close to $p$ the ground state solution $U_{q}$
is the \textit{unique nontrivial }solution of $(P_{q})$. This result is known
for $p=2$, assuming that $\Omega_{a}^{+}$ has finitely many connected
components, cf. \cite{KRQU16}.
\end{enumerate}
\end{remark}

The proofs of Theorems \ref{t} and \ref{ts} are divided into several
Propositions and Lemmae, stated in the next section.

\subsection*{Notation}

Throughout this paper, we use the following notation:

\begin{itemize}

\item $\Omega_{f}^{+}:=\{x\in\Omega:f(x)>0\}$ for $f\in C(\overline{\Omega})$.

\item Given $u$ such that $\int_{\Omega}a(x)|u|^{q}>0$ we denote by $\tilde
{u}$ the projection of $u$ over $\mathcal{S}_{a}$, i.e. $\tilde{u}:=\left(
\int_{\Omega}a(x)|u|^{q}\right)  ^{-\frac{1}{q}}u$.

\item Given $r>1$, we denote by $\Vert\cdot\Vert_{r}$ the usual norm in
$L^{r}(\Omega)$ and by $\Vert\cdot\Vert$ the usual norm in $X$, i.e. $\Vert
u\Vert=\Vert\nabla u\Vert_{p}$ if $X=W_{0}^{1,p}(\Omega)$ and $\Vert
u\Vert=\Vert\nabla u\Vert_{p}+\Vert u\Vert_{p}$ if $X=W^{1,p}(\Omega)$.

\item If $A\subset\mathbb{R}^{N}$ then we denote by $\mathds{1}_{A}$ the
characteristic function of $A$.
\end{itemize}

\section{Proofs}

We set
\[
M:=\inf_{u \in X} I_{q}(u) \quad\text{and} \quad m:=\inf_{v \in\mathcal{S}%
_{a}} \int_{\Omega}|\nabla v|^{p}.
\]

Let us show that these infima provide the same solutions of $(P_{q})$, and
these ones are positive in $\Omega_{a}^{+}$:

\begin{lemma}
\label{l1} \strut

\begin{enumerate}
\item There exists $U\in X$ such that $U \geq0$ and $\displaystyle I_{q}%
(U)=M<0$.

\item There exists $V \in\mathcal{S}_{a}$ such that $V \geq0$ and
$\int_{\Omega}|\nabla V|^{p}=m>0$.

\item If $\displaystyle I_{q}(U)=M$ then $\int_{\Omega}a(x)|U|^{q}>0$ and
$\int_{\Omega}|\nabla\tilde{U}|^{p}=m$.

\item If $\int_{\Omega}|\nabla V|^{p}=m$ and $V \in\mathcal{S}_{a}$ then
$I_{q}(CV)=M$ for some $C>0$.

\item If $\displaystyle I_{q}(U)=M$ and $U \geq0$ then $U>0$ in $\Omega
_{a}^{+}$.

\item If $\int_{\Omega}|\nabla V|^{p}=m$, $V \in\mathcal{S}_{a}$, and $V
\geq0$ then $V>0$ in $\Omega_{a}^{+}$.
\end{enumerate}
\end{lemma}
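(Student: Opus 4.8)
The plan is to treat the six assertions as a coherent block, proving (1)--(2) by the direct method, (3)--(4) by a one-dimensional scaling analysis, and (5)--(6) by a local variational comparison near points of $\Omega_a^+$. I would first record the elementary scaling identity that underlies everything: for $u$ with $\int_\Omega a|u|^q>0$ and $t>0$,
\[
I_q(tu)=\frac{t^p}{p}\int_\Omega|\nabla u|^p-\frac{t^q}{q}\int_\Omega a|u|^q,
\]
which, as a function of $t$, is strictly convex-then-we-don't-care, tends to $+\infty$, is negative for small $t>0$ (since $q<p$), and has a unique positive critical point
\[
t_\ast(u)=\left(\frac{\int_\Omega a|u|^q}{\int_\Omega|\nabla u|^p}\right)^{\frac{1}{p-q}},
\]
at which $I_q$ attains its minimum over the ray, with value a negative constant times $\big(\int_\Omega a|u|^q\big)^{p/(p-q)}\big(\int_\Omega|\nabla u|^p\big)^{-q/(p-q)}$. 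This single computation gives $M<0$ (test with any $u$ making the $a$-integral positive, which exists since $a$ changes sign, using a bump function supported in $\Omega_a^+$), gives $m>0$ (Poincaré, or in the Neumann case the constraint forbids constants because $\int_\Omega a<0$), and sets up the correspondence $M=-c_{p,q}\,m^{-q/(p-q)}$ between the two infima.

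For (1) and (2) I would run the direct method. For (1): take a minimizing sequence $u_n$ for $I_q$; replacing $u_n$ by $|u_n|$ only decreases (weakly, equals) $I_q$, so assume $u_n\ge0$; coercivity of $I_q$ on $X$ (the gradient term dominates the subcritical term $\int a|u|^q$ via Sobolev and Young, absorbing into $\|\nabla u\|_p^p$) gives boundedness in $X$; extract a weak limit $U\ge0$, use weak lower semicontinuity of $u\mapsto\int|\nabla u|^p$ and compactness of the embedding $X\hookrightarrow L^q(\Omega)$ to pass to the limit, so $I_q(U)\le M$, hence $=M$; and $M<0$ from the scaling remark. For (2) the argument is the same on the $C^1$ manifold $\mathcal S_a$: minimizing sequence $v_n\ge0$ with $\int_\Omega a v_n^q=1$; boundedness in $X$ is immediate if $X=W_0^{1,p}$ (Poincaré) and in the Neumann case follows because $\int a v_n^q=1$ together with $\int_\Omega a<0$ prevents $v_n$ from concentrating on constants, so $\|\nabla v_n\|_p$ bounded forces $\|v_n\|_p$ bounded; weak limit $V$, strong $L^q$ convergence keeps $\int_\Omega aV^q=1$ so $V\in\mathcal S_a$, and lower semicontinuity gives $\int|\nabla V|^p\le m$. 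Assertions (3) and (4) are then immediate from the scaling identity: if $I_q(U)=M<0$ then necessarily $\int_\Omega a|U|^q>0$ (otherwise $I_q(U)\ge0$), $\tilde U\in\mathcal S_a$ is well defined, and optimality of $U$ on its ray forces $U=t_\ast(U)\,\tilde U$ up to the normalization, whence $\int|\nabla\tilde U|^p$ realizes $m$; conversely, given a minimizer $V$ on $\mathcal S_a$, the point $C=t_\ast(V)=m^{1/(p-q)}$ on its ray gives $I_q(CV)=-c_{p,q}m^{-q/(p-q)}=M$.

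The genuinely delicate part is (5) (and (6), which is identical after the rescaling provided by (4)): a nonnegative minimizer is \emph{positive} throughout $\Omega_a^+$, even though dead cores can occur elsewhere. Since $U$ solves $(P_q)$ and $a u^{q-1}\ge0$ on $\Omega_a^+$, we have $-\Delta_p U\ge0$ there, so $U$ is $p$-superharmonic on $\Omega_a^+$; by the strong maximum principle for the $p$-Laplacian (Vázquez \cite{Va}), on each connected component of $\Omega_a^+$ either $U\equiv0$ or $U>0$. So the only thing to rule out is that $U$ vanishes identically on some component $\omega$ of $\Omega_a^+$. Here I would argue by contradiction using the scaling/minimality: suppose $U\equiv0$ on $\omega$. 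Pick a nonzero $\varphi\in C_c^\infty(\omega)$, $\varphi\ge0$, and consider the competitor $W_\varepsilon:=U+\varepsilon\varphi$; since $\operatorname{supp}\varphi\subset\omega$ and $U$ vanishes near $\operatorname{supp}\varphi$ (being $0$ on all of $\omega$), the two functions have disjoint supports in a neighborhood of $\operatorname{supp}\varphi$, so
\[
I_q(W_\varepsilon)=I_q(U)+\frac{\varepsilon^p}{p}\int_\omega|\nabla\varphi|^p-\frac{\varepsilon^q}{q}\int_\omega a\,\varphi^q.
\]
Because $\int_\omega a\varphi^q>0$ (as $a>0$ on $\omega$) and $q<p$, for $\varepsilon>0$ small the added terms are negative (the $\varepsilon^q$ term dominates), so $I_q(W_\varepsilon)<I_q(U)=M$, contradicting minimality. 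This forces $U\not\equiv0$ on every component of $\Omega_a^+$, and the strong maximum principle then upgrades this to $U>0$ on $\Omega_a^+$, proving (5); applying (4) to transfer a constrained minimizer $V$ to a global minimizer $CV$ and then using (5) yields (6). The one technical point to be careful about in (5)--(6) is the disjoint-support bookkeeping when $\Omega_a^+$ is merely open (possibly with infinitely many components and no regularity), but since $\varphi$ has compact support inside a single component $\omega$ and $U\equiv0$ on that whole component, $U\equiv0$ on a neighborhood of $\operatorname{supp}\varphi$ and the cross terms in $|\nabla(U+\varepsilon\varphi)|^p$ vanish there, so no smoothness of $\Omega_a^+$ is needed.
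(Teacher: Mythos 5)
Your proposal follows essentially the same route as the paper: the direct method for (1)--(2), the one-ray scaling identity linking $M$ and $m$ for (3)--(4) (the paper picks the specific value $t=\left(\int_{\Omega}a(x)|U|^{q}\right)^{1/q}$ and compares against $I_{q}(tV)$ rather than writing out the closed formula $M=-(\tfrac1q-\tfrac1p)m^{-q/(p-q)}$, but the content is identical), and the strong maximum principle plus a disjoint-support perturbation for (5)--(6) (the paper works on a ball around a zero of $U$ inside $\Omega_{a}^{+}$ rather than on a whole connected component, which is the same argument). The one step you under-justify is coercivity in part (1) for the Neumann case: on $W^{1,p}(\Omega)$ the term $\int_{\Omega}a(x)|u|^{q}$ cannot be absorbed into $\Vert\nabla u\Vert_{p}^{p}$ by Sobolev and Young alone, since $\Vert\nabla u\Vert_{p}$ does not control the full $W^{1,p}$ norm. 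The paper handles this with a compactness/contradiction argument showing $\int_{\Omega}|\nabla u|^{p}\geq C_{1}\Vert u\Vert^{p}$ for every $u$ with $\int_{\Omega}a(x)|u|^{q}\geq0$ (this is where $\int_{\Omega}a<0$ enters), which suffices because a minimizing sequence eventually has $I_{q}(u_{n})<0$ and hence $\int_{\Omega}a(x)|u_{n}|^{q}>0$. You invoke exactly this kind of argument for boundedness on $\mathcal{S}_{a}$ in part (2), so the fix is one you already have in hand, but it needs to be stated in part (1) as well.
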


\begin{proof}
\strut
\begin{enumerate}
\item The proof follows by standard compactness arguments. Let us first show
that $M<0$. Indeed, let $u\in X$ be such that $\int_{\Omega}a(x)|u|^{q}>0$.
Then, for $t>0$ small enough, we have
\[
I_{q}(tu)=\frac{t^{p}}{p}\int_{\Omega}|\nabla u|^{p}-\frac{t^{q}}{q}%
\int_{\Omega}a(x)|u|^{q}<0,
\]
since $q<p$. If $X=W_{0}^{1,p}(\Omega)$ then, by Sobolev and Holder
inequalities, we find some constant $C>0$ such that
\[
I_{q}(u)\geq\frac{1}{p}\Vert u\Vert^{p}-C\Vert u\Vert^{q}\quad\forall u\in X,
\]
i.e. $I$ is coercive. Now, if $X=W^{1,p}(\Omega)$ then we claim that there
exists a constant $C_{1}>0$ such that
$\int_{\Omega}|\nabla u|^{p}\geq C_{1}\Vert u\Vert^{p}$ for every $u\in X$
such that $\int_{\Omega}a(x)|u|^{q}\geq0$. Indeed, otherwise there exists a
sequence $(u_{n})\subset X$ such that
\[
\int_{\Omega}a(x)|u_{n}|^{q}\geq0,\quad\int_{\Omega}|\nabla u_{n}%
|^{p}\rightarrow0,\quad\text{and}\quad\Vert u_{n}\Vert=1
\]
for every $n$. Then, up to a subsequence, we have $u_{n}\rightarrow k$ in $X$,
for some constant $k\neq0$. Since $\int_{\Omega}a(x)|u_{n}|^{q}\geq0$ it
follows that $\int_{\Omega}a\geq0$, a contradiction. Thus the claim is proved
and it implies that
\[
I_{q}(u)\geq%
\begin{cases}
C_{1}\Vert u\Vert^{p}-C\Vert u\Vert^{q} & \mbox{ if }\int_{\Omega}%
a(x)|u|^{q}\geq0,\\
0 & \mbox{ if }\int_{\Omega}a(x)|u|^{q}<0,
\end{cases}
\]
so that $I$ is bounded from below in $X$ in both cases. Therefore, since
$I_{q}$ is weakly lower semi-continuous we deduce that $I_{q}(U)=M<0$ for some
$U\in X$, which can be chosen nonnegative, since $I_{q}(u)=I_{q}(|u|)$.
\item Since $v\mapsto\int_{\Omega}|\nabla v|^{p}$ is weakly lower
semi-continuous and $\mathcal{S}_{a}$ is weakly closed in $X$, we see that
there exists $V\in\mathcal{S}_{a}$ such that $\displaystyle\int_{\Omega
}|\nabla V|^{p}=m$. Moreover since $\int_{\Omega}|\nabla v|^{p}=\int_{\Omega
}|\nabla|v||^{p}$, we can take $V\geq0$. Finally, if $X=W^{1,p}(\Omega)$ then
$V$ is not a constant, in view of the condition $\int_{\Omega}a<0$.
\item Let $U$ be such that $I_{q}(U)=M$. Since $M<0$ we have that
$\int_{\Omega}a(x)|U|^{q}>0$. Let $V\in\mathcal{S}_{a}$ be
such that $\displaystyle\int_{\Omega}|\nabla V|^{p}=m$. Then
\[
I_{q}(U)\leq I_{q}(tV)=\frac{t^{p}}{p}\int_{\Omega}|\nabla V|^{p}-\frac{t^{q}%
}{q}\int_{\Omega}a(x)|V|^{q}=\frac{t^{p}}{p}m-\frac{t^{q}}{q}%
\]
for any $t\in\mathbb{R}$. We choose $t=\left(  \int_{\Omega}a(x)|U|^{q}%
\right)  ^{\frac{1}{q}}$, so that
\[
\frac{1}{p}\int_{\Omega}|\nabla U|^{p}-\frac{1}{q}\int_{\Omega}a(x)|U|^{q}%
=I_{q}(U)\leq\frac{1}{p}\left(  \int_{\Omega}a(x)|U|^{q}\right)  ^{\frac{p}%
{q}}m-\frac{1}{q}\int_{\Omega}a(x)|U|^{q},
\]
i.e.
\[
\int_{\Omega}|\nabla U|^{p}\leq\left(  \int_{\Omega}a(x)|U|^{q}\right)
^{\frac{p}{q}}m.
\]
Thus $\int_{\Omega}|\nabla\tilde{U}%
|^{p}\leq m$, which yields the desired conclusion.
\item We use a similar trick. Let $U$ be as in the first item. Then, by the
previous item,
\[
I_{q}(tV)=\frac{t^{p}}{p}m-\frac{t^{q}}{q}=\frac{t^{p}}{p}\int_{\Omega}%
|\nabla\tilde{U}|^{p}-\frac{t^{q}}{q}=\frac{t^{p}}{p}\frac{\int_{\Omega
}|\nabla U|^{p}}{\left(  \int_{\Omega}a(x)|U|^{q}\right)  ^{\frac{p}{q}}%
}-\frac{t^{q}}{q},
\]
so that, taking $t=\left(  \int_{\Omega}a(x)|U|^{q}\right)  ^{\frac{1}{q}}$,
we find that
\[
I_{q}(tV)=\frac{1}{p}\int_{\Omega}|\nabla U|^{p}-\frac{1}{q}\int_{\Omega
}a(x)|U|^{q}=M,
\]
which concludes the proof.
\item Let $U\geq0$ be such that $I_{q}(U)=M$. If $U(x_{0})=0$ for some
$x_{0}\in\Omega_{a}^{+}$ then, by the strong maximum principle, $U\equiv0$ in
a ball $B\subset\Omega_{a}^{+}$. We choose then $\phi\in C_{0}^{\infty}(B)$
with $\phi\geq0,\not \equiv 0$. Then, for $t>0$ small enough, we have
$I_{q}(t\phi)<0$, so that
\[
I_{q}(U+t\phi)=I_{q}(U)+I_{q}(t\phi)<I_{q}(U)=M,
\]
and we obtain a contradiction. Thus $U>0$ in $\Omega_{a}^{+}$.
\item It follows from (4) and (5).
\end{enumerate}
\end{proof}

Let us prove now that $m$ is achieved by exactly one nonnegative minimizer,
which we denote by $V_{q}$ from now on. This result was proved in \cite{KLP}
in a more general setting, but we include the proof here for completeness. It
relies on the following inequality, which is a particular case of
\cite[Proposition 6.1]{KLP}:

\begin{lemma}
\label{in} Let $q \in[1,p]$ and $\alpha_{1},\alpha_{2} \in[0,1]$ with
$\alpha_{1}^{q}+\alpha_{2}^{q}=1$. Then, for any $\eta_{1},\eta_{2}
\in\mathbb{R}^{N}$, we have
\[
\left|  \alpha_{1}^{q-1}\eta_{1}+\alpha_{2}^{q-1}\eta_{2} \right|  ^{p}
\leq2^{\frac{p}{q}-1}(|\eta_{1}|^{p}+|\eta_{2}|^{p}),
\]
with strict inequality if $\alpha_{1}\neq\alpha_{2}$ and $|\eta_{1}|+|\eta
_{2}|\neq0$.
\end{lemma}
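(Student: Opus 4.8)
The plan is to reduce the vector inequality to a scalar one by the triangle inequality, and then prove the scalar version by a convexity/Hölder argument. First I would observe that
\[
\left| \alpha_{1}^{q-1}\eta_{1}+\alpha_{2}^{q-1}\eta_{2}\right| \leq \alpha_{1}^{q-1}|\eta_{1}|+\alpha_{2}^{q-1}|\eta_{2}|,
\]
so it suffices to bound $\left(\alpha_{1}^{q-1}s_{1}+\alpha_{2}^{q-1}s_{2}\right)^{p}$ by $2^{p/q-1}(s_{1}^{p}+s_{2}^{p})$ for nonnegative reals $s_{i}=|\eta_{i}|$. To handle the strict-inequality clause I would keep track of when equality can hold in the triangle inequality (namely $\eta_{1},\eta_{2}$ positively proportional) and separately when it holds in the scalar estimate.

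For the scalar estimate, the key step is a two-fold application of Hölder's inequality (equivalently, the power-mean inequality), exploiting the constraint $\alpha_{1}^{q}+\alpha_{2}^{q}=1$. Writing $\alpha_{i}^{q-1}s_{i}=\alpha_{i}^{q-1}\cdot s_{i}$ and applying Hölder with exponents $\frac{q}{q-1}$ and $q$ gives
\[
\alpha_{1}^{q-1}s_{1}+\alpha_{2}^{q-1}s_{2}\leq\left(\alpha_{1}^{q}+\alpha_{2}^{q}\right)^{\frac{q-1}{q}}\left(s_{1}^{q}+s_{2}^{q}\right)^{\frac{1}{q}}=\left(s_{1}^{q}+s_{2}^{q}\right)^{\frac{1}{q}}.
\]
Raising to the power $p$ yields $\left(\alpha_{1}^{q-1}s_{1}+\alpha_{2}^{q-1}s_{2}\right)^{p}\leq\left(s_{1}^{q}+s_{2}^{q}\right)^{p/q}$, and then a second application of the power-mean inequality (valid since $p/q\geq1$), namely $\left(\frac{s_{1}^{q}+s_{2}^{q}}{2}\right)^{1/q}\leq\left(\frac{s_{1}^{p}+s_{2}^{p}}{2}\right)^{1/p}$, gives $\left(s_{1}^{q}+s_{2}^{q}\right)^{p/q}\leq 2^{p/q-1}(s_{1}^{p}+s_{2}^{p})$. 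Chaining the two bounds completes the inequality.

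The main obstacle, and the place requiring genuine care, is the strict-inequality statement under the hypotheses $\alpha_{1}\neq\alpha_{2}$ and $|\eta_{1}|+|\eta_{2}|\neq0$. I would argue as follows. If the overall inequality were an equality, then equality must hold in both Hölder applications: equality in the first forces the vectors $(\alpha_{1}^{q},\alpha_{2}^{q})$ and $(s_{1}^{q},s_{2}^{q})$ to be proportional, i.e. $s_{1}/\alpha_{1}=s_{2}/\alpha_{2}$ (interpreting a zero denominator appropriately), while equality in the second power-mean step forces $s_{1}=s_{2}$. Combined with $\alpha_{1}^{q}+\alpha_{2}^{q}=1$ and $\alpha_{1}\neq\alpha_{2}$, one checks these cannot hold simultaneously unless $s_{1}=s_{2}=0$; but the latter contradicts $|\eta_{1}|+|\eta_{2}|\neq 0$ once we note that if $|\eta_1|=|\eta_2|=0$ then the hypothesis fails. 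A minor subtlety is the degenerate case where some $\alpha_{i}=0$: then $q-1>0$ (we may assume $q>1$, since for $q=1$ the statement is the triangle inequality with $2^{p-1}(|\eta_1|^p+|\eta_2|^p)\geq(|\eta_1|+|\eta_2|)^p$ and $\alpha_1\neq\alpha_2$ forces one $\alpha_i$ to be $0$, handled directly), the corresponding term drops out, and the inequality reduces to $|\eta_{j}|^{p}\leq 2^{p/q-1}|\eta_{j}|^{p}$, which is strict whenever $|\eta_{j}|\neq0$ since $p/q>1$. Assembling these cases yields the strict inequality in full generality.
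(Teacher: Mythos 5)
The paper does not actually prove Lemma \ref{in}; it quotes it as a particular case of \cite[Proposition 6.1]{KLP}, so there is no internal proof to measure you against. Your chain --- triangle inequality, then H\"older with exponents $\frac{q}{q-1}$ and $q$ played against the constraint $\alpha_1^q+\alpha_2^q=1$, then the power-mean inequality $(s_1^q+s_2^q)^{p/q}\le 2^{p/q-1}(s_1^p+s_2^p)$ with $s_i=|\eta_i|$ --- is a correct and clean derivation of the non-strict inequality for every $q\in[1,p]$. Your equality analysis (H\"older equality forces $(s_1^q,s_2^q)$ proportional to $(\alpha_1^q,\alpha_2^q)$, power-mean equality forces $s_1=s_2$, and the two are incompatible with $\alpha_1\ne\alpha_2$ unless $s_1=s_2=0$) is likewise complete and correct for $1<q<p$, which is the only range in which the lemma is invoked (Proposition \ref{um} uses it with $q\in(1,p)$).

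The endpoints are where your write-up goes wrong. For $q=1$ you assert that $\alpha_1\ne\alpha_2$ forces some $\alpha_i=0$; this is false (take $\alpha_1=0.7$, $\alpha_2=0.3$). In fact no argument can rescue strictness there: with both $\alpha_i>0$ the coefficients $\alpha_i^{q-1}$ are both equal to $1$, and $\eta_1=\eta_2=\eta\ne0$ gives $|2\eta|^p=2^{p-1}\left(|\eta|^p+|\eta|^p\right)$, an equality satisfying all the hypotheses. Similarly, at $q=p$ your power-mean step degenerates to an identity, so ``equality forces $s_1=s_2$'' is no longer available, and the strictness assertion itself again fails ($\alpha_1=1$, $\alpha_2=0$, $\eta_2=0$, $\eta_1\ne0$ yields equality of both sides). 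So the real defect lies in the statement as quoted --- the strict-inequality clause requires $1<q<p$ --- but a careful proof should flag this rather than cover the case $q=1$ with an incorrect claim and pass over $q=p$ in silence. For the purposes of this paper, your argument does prove everything that is actually used.
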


\begin{proposition}
\label{um} There exists exactly one $V_{q} \in\mathcal{S}_{a}$ such that
$V_{q} \geq0$ and $\int_{\Omega}|\nabla V_{q}|^{p}=m$.
\end{proposition}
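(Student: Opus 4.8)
The existence of a nonnegative minimizer $V_q$ is already guaranteed by Lemma \ref{l1}(2), so the only thing to prove is uniqueness. The plan is to argue by contradiction: suppose $V_1, V_2 \in \mathcal{S}_a$ are two distinct nonnegative minimizers, $\int_\Omega |\nabla V_i|^p = m$ for $i=1,2$. I would then construct from $V_1$ and $V_2$ a new competitor in $\mathcal{S}_a$ whose Dirichlet energy is strictly below $m$, which is absurd. The natural candidate is $W := (V_1^q + V_2^q)^{1/q}$, the ``$q$-interpolation'' of $V_1$ and $V_2$; this is the standard device in the D\'iaz--Saa circle of ideas and is exactly what Lemma \ref{in} is tailored for.

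First I would check that $W \in X$ and compute its gradient a.e.: on the set where $V_1^q + V_2^q > 0$ one has
\[
\nabla W = \alpha_1^{q-1}\nabla V_1 + \alpha_2^{q-1}\nabla V_2, \qquad \alpha_i := \frac{V_i}{(V_1^q+V_2^q)^{1/q}},
\]
so that $\alpha_1^q + \alpha_2^q = 1$ pointwise, while $\nabla W = 0$ a.e. on $\{V_1 = V_2 = 0\}$. Applying Lemma \ref{in} with $\eta_i = \nabla V_i$ gives
\[
|\nabla W|^p \le 2^{\frac{p}{q}-1}\left(|\nabla V_1|^p + |\nabla V_2|^p\right) \quad \text{a.e. in } \Omega .
\]
Integrating, $\int_\Omega |\nabla W|^p \le 2^{\frac{p}{q}-1} \cdot 2m = 2^{p/q} m$. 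On the other hand, since $W^q = V_1^q + V_2^q$, we get $\int_\Omega a(x) W^q = \int_\Omega a(x)(V_1^q + V_2^q) = 1 + 1 = 2$, so the rescaled function $\tilde W = 2^{-1/q} W$ lies in $\mathcal{S}_a$ and satisfies $\int_\Omega |\nabla \tilde W|^p = 2^{-p/q}\int_\Omega|\nabla W|^p \le m$. Hence $\tilde W$ is itself a minimizer, and equality must hold in Lemma \ref{in} for a.e. $x$.

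The last step extracts the contradiction from the equality case. Lemma \ref{in} says the inequality is strict whenever $\alpha_1 \ne \alpha_2$ and $|\nabla V_1| + |\nabla V_2| \ne 0$; therefore for a.e.\ $x$ we must have either $\alpha_1(x) = \alpha_2(x)$, i.e.\ $V_1(x) = V_2(x)$, or $\nabla V_1(x) = \nabla V_2(x) = 0$. On the (open, up to null sets) region where the gradients vanish, $V_1$ and $V_2$ are locally constant; combining with the constraint, continuity of $V_1, V_2$ (they are $C^{1,\alpha}(\overline\Omega)$ by the regularity quoted in the introduction, being solutions of $(P_q)$ after rescaling, via Lemma \ref{l1}(4)), and the fact that $V_1 = V_2$ on the complement, one concludes $V_1 \equiv V_2$ on $\Omega$ — for instance, $\nabla(V_1 - V_2) = 0$ a.e.\ on $\{V_1 = V_2\}$ as well (a.e.\ on any level set), so $\nabla(V_1-V_2) = 0$ a.e.\ in $\Omega$, forcing $V_1 - V_2$ constant, and the constant is $0$ in the Dirichlet case by the boundary condition, and in the Neumann case because $V_1, V_2$ share the common value $1$ for $\int_\Omega a |V_i|^q$. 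I expect the main obstacle to be precisely this equality-case analysis: one must handle carefully the set where both gradients vanish versus where $V_1 = V_2$, and patch these together into a global identity; the cleanest route is to observe that $\nabla V_1 = \nabla V_2$ a.e.\ on $\Omega$ (on $\{V_1 = V_2\}$ the difference of gradients vanishes a.e.\ since it is a level set of $V_1 - V_2$, and on the complement both gradients are zero), whence $V_1 - V_2$ is constant and then identically zero.
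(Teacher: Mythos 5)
Your argument is essentially the paper's own proof: the same $q$-power interpolant (you normalize $W=(V_1^q+V_2^q)^{1/q}$ at the end, the paper builds in the factor $\tfrac12$ from the start, which is immaterial), the same application of Lemma \ref{in}, and the same equality-case dichotomy ``$V_1(x)=V_2(x)$ or $\nabla V_1(x)=\nabla V_2(x)=0$ a.e.'', leading via the level-set fact to $V_1-V_2\equiv C$.

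The one step that does not hold up as written is your disposal of the constant in the Neumann case. You claim $C=0$ ``because $V_1,V_2$ share the common value $1$ for $\int_\Omega a|V_i|^q$'', i.e.\ that $\int_\Omega a\bigl[(V_2+C)^q-V_2^q\bigr]=0$ forces $C=0$. Since $a$ changes sign, the integrand $(V_2+C)^q-V_2^q>0$ gets multiplied by a sign-changing weight and the integral can a priori vanish for some $C\neq 0$; the map $C\mapsto\int_\Omega a(V_2+C)^q$ is not monotone here. The fix is already contained in your own dichotomy, and is what the paper does: if $C\neq 0$ then $V_1(x)\neq V_2(x)$ for \emph{every} $x$, so the a.e.\ alternative forces $\nabla V_1=\nabla V_2=0$ a.e., making $V_1,V_2$ constant and hence $\int_\Omega|\nabla V_i|^p=0$, contradicting $m>0$ from Lemma \ref{l1}-(2). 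This argument covers the Dirichlet and Neumann cases uniformly, so you do not need the boundary condition at all in this last step. With that replacement your proof is complete and coincides with the paper's.
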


\begin{proof}
Assume that $V_{1},V_{2}\geq0$ satisfy
\[
\int_{\Omega}|\nabla V_{1}|^{p}=\int_{\Omega}|\nabla V_{2}|^{p}=m\quad
\text{and}\quad\int_{\Omega}a(x)V_{1}^{q}=\int_{\Omega}a(x)V_{2}^{q}=1.
\]
We set $W:=\left(  \frac{V_{1}^{q}+V_{2}^{q}}{2}\right)  ^{\frac{1}{q}}$,
so that $\int_{\Omega}a(x)W^{q}=1$, and
\begin{align*}
\nabla W  &  =\frac{1}{2}\left(  \frac{V_{1}^{q}+V_{2}^{q}}{2}\right)
^{\frac{1-q}{q}}\left(  V_{1}^{q-1}\nabla V_{1}+V_{2}^{q-1}\nabla
V_{2}\right)  \mathds{1}_{\Omega_{V_{1}}^{+}\cup\Omega_{V_{2}}^{+}}\\
&  =2^{-\frac{1}{q}}\left[  \left(  \frac{V_{1}^{q}}{V_{1}^{q}+V_{2}^{q}%
}\right)  ^{\frac{q-1}{q}}\nabla V_{1}+\left(  \frac{V_{2}^{q}}{V_{1}%
^{q}+V_{2}^{q}}\right)  ^{\frac{q-1}{q}}\nabla V_{2}\right]
\mathds{1}_{\Omega_{V_{1}}^{+}\cup\Omega_{V_{2}}^{+}}.
\end{align*}
We apply Lemma \ref{in} with $\alpha_{1}=\left(  \frac{V_{1}^{q}}{V_{1}%
^{q}+V_{2}^{q}}\right)  ^{\frac{1}{q}}$, $\alpha_{2}=\left(  \frac{V_{2}^{q}%
}{V_{1}^{q}+V_{2}^{q}}\right)  ^{\frac{1}{q}}$, $\eta_{1}=\nabla V_{1}$, and
$\eta_{2}=\nabla V_{2}$. Thus
\[
\left\vert \left(  \frac{V_{1}^{q}}{V_{1}^{q}+V_{2}^{q}}\right)  ^{\frac
{q-1}{q}}\nabla V_{1}+\left(  \frac{V_{2}^{q}}{V_{1}^{q}+V_{2}^{q}}\right)
^{\frac{q-1}{q}}\nabla V_{2}\right\vert ^{p}\leq2^{\frac{p}{q}-1}\left(
|\nabla V_{1}|^{p}+|\nabla V_{2}|^{p}\right)
\]
in $\Omega_{V_{1}}^{+}\cup\Omega_{V_{2}}^{+}$, with strict inequality in the
set
\[
E:=\{x\in\Omega_{V_{1}}^{+}\cup\Omega_{V_{2}}^{+}:V_{1}(x)\neq V_{2}%
(x),|\nabla V_{1}(x)|+|\nabla V_{2}(x)|\neq0\}.
\]
It follows that
\[
\int_{\Omega}|\nabla W|^{p}\leq2^{-\frac{p}{q}}\int_{\Omega_{V_{1}}^{+}%
\cup\Omega_{V_{2}}^{+}}2^{\frac{p}{q}-1}\left(  |\nabla V_{1}|^{p}+|\nabla
V_{2}|^{p}\right)  \leq m.
\]
Thus $\int_{\Omega}|\nabla W|^{p}=m$ and $|E|=0$, so that for almost every
$x\in\Omega$ we have
\[
V_{1}(x)=V_{2}(x)\quad\text{or}\quad\nabla V_{1}(x)=\nabla V_{2}(x)=0
\]
In particular, $\nabla V_{1}=\nabla V_{2}$ a.e. in $\Omega$, so $V_{1}\equiv
V_{2}+C$, for some constant $C$. If $C\neq0$ then, from the alternative above,
we have $\nabla V_{1}=\nabla V_{2}=0$ a.e. in $\Omega$, which is impossible.
Therefore $V_{1}\equiv V_{2}$, and the proof is complete.
\end{proof}

From Lemma \ref{l1}-(3) we deduce that $I_{q}$ has a unique nonnegative
minimizer, and we denote it by $U_{q}$ from now on.

\begin{corollary}
\label{un} There exists exactly one $U_{q} \in X$ such that $U_{q}\geq0$ and
$I_{q}(U_{q})=M$.
\end{corollary}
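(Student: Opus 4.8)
The plan is to deduce Corollary \ref{un} directly from the preceding results rather than repeating a compactness/convexity argument. Existence is already in hand: Lemma \ref{l1}-(1) furnishes some $U\in X$ with $U\geq 0$ and $I_q(U)=M$. So the only thing to prove is uniqueness, and the idea is to transport the uniqueness of the constrained minimizer $V_q$ (Proposition \ref{um}) to the unconstrained setting via the projection map $u\mapsto\tilde u$.

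Concretely, suppose $U_1,U_2\geq 0$ both satisfy $I_q(U_i)=M$. By Lemma \ref{l1}-(3), $\int_\Omega a(x)|U_i|^q>0$, so the projections $\tilde U_i\in\mathcal S_a$ are well defined, are nonnegative, and satisfy $\int_\Omega|\nabla\tilde U_i|^p=m$. By Proposition \ref{um}, $\tilde U_1=\tilde U_2=V_q$; that is, $U_i=c_i V_q$ with $c_i=\bigl(\int_\Omega a(x)U_i^q\bigr)^{1/q}>0$. It then remains to identify the scalar. Plugging $U_i=c_iV_q$ into $I_q$ gives $I_q(c_iV_q)=\frac{c_i^p}{p}\,m-\frac{c_i^q}{q}$ (using $\int_\Omega a(x)V_q^q=1$), and since this common value equals $M$ for both $i$, the two scalars $c_1,c_2$ are zeros of the same function $t\mapsto \frac{t^p}{p}m-\frac{t^q}{q}$ on $(0,\infty)$. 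That function is strictly convex in $t^q$ (or: its derivative $t^{q-1}(t^{p-q}m-1)$ vanishes at exactly one positive point, and the function is negative there and tends to $+\infty$), hence it attains the value $M<0$ at exactly one positive $t$. Therefore $c_1=c_2$ and $U_1=U_2$.

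I expect no real obstacle here; the content has all been front-loaded into Lemma \ref{l1} and Proposition \ref{um}. The only mildly delicate point is the elementary fact that $\varphi(t):=\frac{t^p}{p}m-\frac{t^q}{q}$ takes each negative value at most once on $(0,\infty)$ — this is immediate since $\varphi(t)=t^q\bigl(\frac{t^{p-q}}{p}m-\frac1q\bigr)$ with $t\mapsto t^{p-q}$ strictly increasing, so on the interval where $\varphi<0$ the bracket is increasing from $-\infty$ toward $0$ while $t^q$ is increasing and positive, making $\varphi$ strictly increasing there; alternatively one notes $\varphi'(t)=t^{q-1}(mt^{p-q}-1)$ changes sign exactly once, so $\varphi$ is first decreasing then increasing and hence injective on its decreasing branch, which is where all negative values (in particular $M$) are taken. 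One should also remark that $C$ in Lemma \ref{l1}-(4) is necessarily this same $c=\bigl(\int_\Omega a(x)U^q\bigr)^{1/q}$, tying the two minimization procedures together cleanly.

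\begin{proof}
Existence is Lemma \ref{l1}-(1). For uniqueness, let $U_{1},U_{2}\in X$ satisfy $U_{i}\geq0$ and $I_{q}(U_{i})=M$. By Lemma \ref{l1}-(3) we have $\int_{\Omega}a(x)U_{i}^{q}>0$ and $\int_{\Omega}|\nabla\tilde{U}_{i}|^{p}=m$, with $\tilde{U}_{i}\in\mathcal{S}_{a}$ and $\tilde{U}_{i}\geq0$. Proposition \ref{um} gives $\tilde{U}_{1}=\tilde{U}_{2}=V_{q}$, hence $U_{i}=c_{i}V_{q}$ with $c_{i}:=\left(\int_{\Omega}a(x)U_{i}^{q}\right)^{1/q}>0$. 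Since $\int_{\Omega}|\nabla V_{q}|^{p}=m$ and $\int_{\Omega}a(x)V_{q}^{q}=1$, we obtain
\[
M=I_{q}(c_{i}V_{q})=\frac{c_{i}^{p}}{p}\,m-\frac{c_{i}^{q}}{q}=\varphi(c_{i}),\qquad\text{where }\varphi(t):=\frac{t^{p}}{p}\,m-\frac{t^{q}}{q}.
\]
Now $\varphi'(t)=t^{q-1}(mt^{p-q}-1)$ vanishes at the single point $t_{\ast}:=m^{-1/(p-q)}>0$, is negative on $(0,t_{\ast})$ and positive on $(t_{\ast},\infty)$; thus $\varphi$ is strictly decreasing on $(0,t_{\ast})$ and strictly increasing on $(t_{\ast},\infty)$, with $\varphi(0)=0$ and $\varphi(t_{\ast})<0$. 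Consequently $\varphi$ attains each value in $[\varphi(t_{\ast}),0)$ exactly once on $(0,\infty)$, namely on $(0,t_{\ast}]$. Since $M=\varphi(c_{1})=\varphi(c_{2})<0$, it follows that $c_{1}=c_{2}$, and therefore $U_{1}=U_{2}$. Denoting this common minimizer by $U_{q}$ completes the proof.
\end{proof}
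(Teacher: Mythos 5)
Your overall strategy (existence from Lemma \ref{l1}-(1), then project both minimizers onto $\mathcal{S}_a$ and invoke Proposition \ref{um} to get $U_i=c_iV_q$) is exactly the paper's, but the way you pin down the scalar contains a genuine error. You claim that $\varphi(t)=\frac{t^p}{p}m-\frac{t^q}{q}$ ``attains each value in $[\varphi(t_*),0)$ exactly once on $(0,\infty)$, namely on $(0,t_*]$.'' This is false: $\varphi$ decreases from $0$ to $\varphi(t_*)$ on $(0,t_*]$ and then increases from $\varphi(t_*)$ back up through $0$ to $+\infty$, so every value in the open interval $(\varphi(t_*),0)$ is attained \emph{twice}, once on each branch. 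Only the minimum value $\varphi(t_*)$ itself is attained once. From $\varphi(c_1)=\varphi(c_2)=M<0$ alone you therefore cannot conclude $c_1=c_2$.

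The gap is easily closed, but it requires an ingredient you did not invoke: since $tV_q\in X$ for every $t>0$, we have $M=\inf_X I_q\le\inf_{t>0}\varphi(t)=\varphi(t_*)$, while $M=\varphi(c_i)\ge\varphi(t_*)$; hence $M=\varphi(t_*)$ and, by strict monotonicity of $\varphi$ on each side of $t_*$, the value $\varphi(t_*)$ is attained only at $t_*$, so $c_1=c_2=t_*$. (This is essentially the content of Lemma \ref{l1}-(4) read backwards.) The paper takes a different and shorter route to the same end: having obtained $U_1=CU_2$, it uses that both $U_i$ are critical points of $I_q$ and hence solve $(P_q)$; plugging $U_1=CU_2$ into the weak formulation and testing against $U_2$ gives $C^{p-1}\int_\Omega|\nabla U_2|^p=C^{q-1}\int_\Omega a(x)U_2^q$, and since these two integrals are equal and positive, $C^{p-q}=1$, i.e.\ $C=1$. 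Either repair works; as written, your calculus step does not.
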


\begin{proof}
If $U_1,U_2 \in X$ satisfy $U_1,U_2 \geq 0$ and $I_q(U_1)=I_q(U_2)=M$ then, by  Lemma \ref{l1}-(3) and Proposition \ref{um}, we have $\tilde{U}_1 \equiv \tilde{U}_2 \equiv V_q$. Thus $U_1=CU_2$, for some $C>0$. But since $U_1$ and $U_2$ solve $(P_q)$, we infer that $C=1$.
\end{proof}

\begin{remark}
\label{rm} When $\Omega_{a}^{+}$ is connected, every minimizer of $I_{q}$ has
a sign, cf. \cite[Theorem 1.2]{KLP}. However, when $\Omega_{a}^{+}$ is
disconnected $I_{q}$ may have a sign-changing minimizer, cf. \cite[Example
6.3]{KLP} for $q=1$ and $p=2$. More generally, for $1<q<p=2$, this situation
occurs, for instance, if $\Omega=(b,c)$, and $\Omega_{a}^{+}=(b,b+\delta
)\cup(c-\delta,c)$, for some $\delta>0$. If $a$ is sufficiently negative in
$(b+\delta,c-\delta)$ then any solution of $(P_{q})$ vanishes in a subinterval
of $(b+\delta,c-\delta)$, cf. \cite[Theorem 3.2]{ans}. Thus $U_{q}$ has two
positive bumps, so that changing the sign of one of these bumps one gets a
sign-changing minimizer of $I_{q}$.
\end{remark}

The next step is to show that $U_{q}$ is the only solution of $(P_{q})$
satisfying $U_{q}>0$ in $\Omega_{a}^{+}$. This result, which has been proved
in \cite[Theorem 5.1]{BF} for $a \equiv1$, is based on the following
generalized Picone's identity (or inequality). We also include a (simpler)
proof here, since \cite[Proposition 2.9]{BF} deals with a more general
differential operator. Note that when $q=p$ we obtain the usual Picone's
identity, which has been used to prove the simplicity of the first
$p$-Laplacian eigenvalue (among other results), cf. \cite{AH}.

\begin{lemma}
[Generalized Picone's identity]\label{lp} Let $q \in[1,p]$ and $u,v \in
W^{1,p}(\Omega)$ with $u>0$ and $v\geq0$. Then
\[
|\nabla u|^{p-2} \nabla u \nabla\left(  \frac{v^{q}}{u^{q-1}}\right)
\leq|\nabla u|^{p-q} |\nabla v|^{q}.
\]

\end{lemma}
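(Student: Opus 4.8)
The plan is to expand the gradient on the left-hand side by the product and quotient rules and then estimate the resulting two terms by Young's inequality. First I would compute
\[
\nabla\!\left(\frac{v^q}{u^{q-1}}\right)
= q\,\frac{v^{q-1}}{u^{q-1}}\,\nabla v-(q-1)\,\frac{v^q}{u^q}\,\nabla u,
\]
which is legitimate on $\{v>0\}$ since $u>0$ there; on $\{v=0\}$ both sides of the claimed inequality are $0$ (for $q>1$ the factor $v^{q-1}$ kills everything; for $q=1$ one argues on $\{v>0\}$ and notes the inequality is trivial where $v=0$), so it suffices to work where $v>0$. Dotting with $|\nabla u|^{p-2}\nabla u$ gives
\[
|\nabla u|^{p-2}\nabla u\cdot\nabla\!\left(\frac{v^q}{u^{q-1}}\right)
= q\,\frac{v^{q-1}}{u^{q-1}}\,|\nabla u|^{p-2}\nabla u\cdot\nabla v
-(q-1)\,\frac{v^q}{u^q}\,|\nabla u|^{p}.
\]

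Next I would bound the first term using $|\nabla u|^{p-2}\nabla u\cdot\nabla v\le |\nabla u|^{p-1}|\nabla v|$ (Cauchy--Schwarz), so the right-hand side is at most
\[
q\,\frac{v^{q-1}}{u^{q-1}}\,|\nabla u|^{p-1}|\nabla v|
-(q-1)\,\frac{v^q}{u^q}\,|\nabla u|^{p}.
\]
Now I would apply Young's inequality to the product $|\nabla u|^{p-1}|\nabla v|$ with a well-chosen weight that produces exactly the two monomials $|\nabla u|^{p-q}|\nabla v|^{q}$ and $|\nabla u|^{p}$ in the right proportions. Concretely, writing $|\nabla u|^{p-1}|\nabla v| = \bigl(|\nabla u|^{p-q}|\nabla v|^q\bigr)^{1/q}\cdot\bigl(|\nabla u|^p\bigr)^{(q-1)/q}$ and using Young with exponents $q$ and $q/(q-1)$ (and an appropriate multiplicative constant $t>0$ chosen so that the $\frac{v^q}{u^q}|\nabla u|^p$ term it generates has coefficient exactly $(q-1)$, canceling the negative term) leaves precisely $|\nabla u|^{p-q}|\nabla v|^{q}$. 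The clean choice is: with $s=\dfrac{v}{u}$, one has $q s^{q-1}|\nabla u|^{p-1}|\nabla v| \le |\nabla u|^{p-q}|\nabla v|^q + (q-1)s^q|\nabla u|^p$, by Young applied to $a=|\nabla u|^{p-q}|\nabla v|^q$ and $b=|\nabla u|^p$ at the pair $(q,\frac{q}{q-1})$ after multiplying $b$ by $s^{q\cdot q/(q-1)}$ appropriately; substituting back gives exactly the claimed bound.

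The only genuinely delicate point is the justification of the chain/quotient rule for $\nabla(v^q/u^{q-1})$ as a Sobolev function when $q$ may be non-integer and $u,v$ are merely in $W^{1,p}$: here I would invoke that $u>0$ combined with $u,v\in C^{1,\alpha}(\overline\Omega)$ (which holds in our setting by the regularity recalled in the Introduction, or more generally by standard Sobolev chain-rule lemmas for $C^1$ compositions since $1\le q$ makes $t\mapsto t^q$ locally $C^1$ on $[0,\infty)$ away from issues at $0$, handled by the $\{v=0\}$ remark above), so that $v^q/u^{q-1}\in W^{1,p}(\Omega)$ and the pointwise gradient formula holds a.e. The rest is the elementary Young's inequality bookkeeping, and equality analysis (not needed for the stated inequality) would track the equality case of Young, namely $|\nabla v| = s|\nabla u|$, i.e. $\nabla v$ parallel to $\nabla u$ with ratio $v/u$.
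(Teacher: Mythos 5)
Your proposal is correct and follows essentially the same route as the paper: expand $\nabla(v^{q}/u^{q-1})$ by the product rule, bound the cross term by Cauchy--Schwarz, and apply Young's inequality with exponents $q$ and $q/(q-1)$ so that the resulting $(v/u)^{q}|\nabla u|^{p}$ term cancels exactly against the negative term, leaving $|\nabla u|^{p-q}|\nabla v|^{q}$. Your additional care about the set $\{v=0\}$ and the justification of the chain rule is a reasonable supplement to the paper's more formal computation, but the core argument is identical.
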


\begin{proof}
Note that
$$|\nabla u|^{p-2} \nabla u \nabla \left(\frac{v^q}{u^{q-1}}\right)=q\left(\frac{v}{u}\right)^{q-1} |\nabla u|^{p-2} \nabla u \nabla v -(q-1)\left(\frac{v}{u}\right)^q |\nabla u|^p$$
We apply Young's inequality $ab \leq \frac{a^r}{r}+\frac{b^{r'}}{r'}$ with $a=\left(\frac{v}{u}\right)^{q-1} |\nabla u|^{\frac{p(q-1)}{q}}$, $b=|\nabla u|^{\frac{p-q}{q}}|\nabla v|$, and $r=\frac{q}{q-1}$, so that $r'=q$. Thus
$$\left(\frac{v}{u}\right)^{q-1} |\nabla u|^{p-1}  |\nabla v|=ab \leq \frac{q-1}{q}\left(\frac{v}{u}\right)^q |\nabla u|^p+\frac{1}{q}|\nabla u|^{p-q}  |\nabla v|^q,$$
which yields the desired conclusion.
\end{proof}

\begin{proposition}
\label{us} If $u$ is a solution of $(P_{q})$ such that $u>0$ in $\Omega
_{a}^{+}$ then $u \equiv U_{q}$.
\end{proposition}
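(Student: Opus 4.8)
The plan is to run the classical Picone-inequality argument, handling the possible dead core of $u$ through an $\varepsilon$-regularization of the test function. First I would normalize: testing $(P_q)$ with $\phi=u$ gives $\int_\Omega|\nabla u|^p=\int_\Omega a u^q$, and since $\Omega_a^+\neq\emptyset$ and $u>0$ there, $u\not\equiv0$; moreover $u$ cannot be a nonzero constant (a nonzero constant does not belong to $W_0^{1,p}(\Omega)$ in the Dirichlet case, and contradicts $\int_\Omega a<0$ in the Neumann case). Hence $\int_\Omega a u^q=\int_\Omega|\nabla u|^p>0$, so $w:=\tilde u\in\mathcal{S}_a$ is well defined, with $w\geq0$ and $w>0$ in $\Omega_a^+$. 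Writing $\tau:=\big(\int_\Omega a u^q\big)^{1/q}>0$, so that $u=\tau w$, a rescaling of the equation yields $-\Delta_p w=\ell\,a\,w^{q-1}$ with $\ell:=\tau^{q-p}=\int_\Omega|\nabla w|^p$ (the two expressions for $\ell$ agree since $\tau^q=\int_\Omega a u^q=\int_\Omega|\nabla u|^p=\tau^p\int_\Omega|\nabla w|^p$). As $w\in\mathcal{S}_a$ we have $\ell\geq m$, so it suffices to prove $\ell\leq m$: then $w$ is a nonnegative minimizer of $\int_\Omega|\nabla v|^p$ over $\mathcal{S}_a$, hence $w\equiv V_q$ by Proposition~\ref{um}.

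For $\ell\leq m$ I would use, for $\varepsilon>0$, the test function $\phi_\varepsilon:=\dfrac{V_q^{\,q}}{(w+\varepsilon)^{q-1}}\geq0$. One checks that $\phi_\varepsilon\in X$: since $w+\varepsilon\geq\varepsilon>0$ and $w,V_q\in C^{1}(\overline\Omega)$, it is $C^1(\overline\Omega)$, hence in $W^{1,p}(\Omega)$, and in the Dirichlet case it vanishes on $\partial\Omega$ because $V_q$ does. Applying Lemma~\ref{lp} to $w+\varepsilon>0$ and $V_q\geq0$, and using $\nabla(w+\varepsilon)=\nabla w$, we get pointwise $|\nabla w|^{p-2}\nabla w\,\nabla\phi_\varepsilon\leq|\nabla w|^{p-q}|\nabla V_q|^q$. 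Integrating this, using the equation for $w$ and then Hölder's inequality with exponents $\tfrac{p}{p-q}$ and $\tfrac{p}{q}$, we obtain
\[
\ell\int_\Omega a\,w^{q-1}\phi_\varepsilon=\int_\Omega|\nabla w|^{p-2}\nabla w\,\nabla\phi_\varepsilon\leq\int_\Omega|\nabla w|^{p-q}|\nabla V_q|^q\leq\ell^{\frac{p-q}{p}}m^{\frac{q}{p}}.
\]

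Next I would let $\varepsilon\to0^+$. As $q>1$, the integrand $a\,w^{q-1}\phi_\varepsilon=a\,V_q^{\,q}\big(\tfrac{w}{w+\varepsilon}\big)^{q-1}$ vanishes on $\{w=0\}$, converges to $a\,V_q^{\,q}$ on $\{w>0\}$, and is dominated by $|a|\,V_q^{\,q}\in L^1(\Omega)$; dominated convergence gives $\int_\Omega a\,w^{q-1}\phi_\varepsilon\to\int_{\{w>0\}}a\,V_q^{\,q}$. Since $w>0$ on $\Omega_a^+$, the set $\{w=0\}$ lies in $\{a\leq0\}$, so $\int_{\{w>0\}}a\,V_q^{\,q}\geq\int_\Omega a\,V_q^{\,q}=1$. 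Passing to the limit in the displayed inequality gives $\ell\leq\ell\int_{\{w>0\}}a\,V_q^{\,q}\leq\ell^{\frac{p-q}{p}}m^{\frac{q}{p}}$, and since $\ell>0$ this forces $\ell^{q/p}\leq m^{q/p}$, i.e. $\ell\leq m$. Therefore $\ell=m$ and $w\equiv V_q$ by Proposition~\ref{um}.

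Finally, $u=\tau V_q$ is a positive multiple of $V_q$ solving $(P_q)$. Since $\widetilde{U_q}=V_q$ as well (by Lemma~\ref{l1}-(3) and Proposition~\ref{um}), $U_q$ is also a positive multiple of $V_q$ that solves $(P_q)$; because $q\neq p$, the identity $-\Delta_p(cV_q)=a\,(cV_q)^{q-1}$ determines $c>0$ uniquely, whence $u\equiv U_q$. I expect the main obstacle to be precisely the dead core of $u$: one must work with the regularized $\phi_\varepsilon$ and justify the passage to the limit $\varepsilon\to0$, since the naive Picone test function $V_q^{\,q}/u^{q-1}$ need not belong to $X$.
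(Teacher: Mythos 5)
Your proposal is correct and follows essentially the same route as the paper: the $\varepsilon$-regularized Picone test function $V_q^{\,q}/(\,\cdot+\varepsilon)^{q-1}$, H\"older's inequality, the observation that $\{u=0\}\subset\{a\le 0\}$, dominated convergence, and the uniqueness of the constrained minimizer $V_q$ from Proposition~\ref{um}. The only difference is cosmetic: you normalize $u$ to $\tilde u\in\mathcal{S}_a$ before applying Picone and fix the scaling constant at the end via the equation, whereas the paper tests the equation for $u$ directly and normalizes afterwards, concluding via Lemma~\ref{l1}-(4) and Corollary~\ref{un}.
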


\begin{proof}
Let $\epsilon>0$.
We take $\frac{V_q^{q}}{(u+\epsilon)^{q-1}}$ as test function in $(P_{q})$ and
apply Lemma \ref{lp} (with $u+\epsilon$ instead of $u$) to obtain
\[
\int_{\Omega}a(x)u^{q-1}\frac{V_q^{q}}{(u+\epsilon)^{q-1}}=\int_{\Omega}|\nabla
u|^{p-2}\nabla u\nabla\left(  \frac{V_q^{q}}{(u+\epsilon)^{q-1}}\right)
\leq\int_{\Omega}|\nabla u|^{p-q}|\nabla V_q|^{q}.
\]
Now, by Holder's inequality we find that
\[
\int_{\Omega}|\nabla u|^{p-q}|\nabla V_q|^{q}\leq\left(  \int_{\Omega}|\nabla
u|^{p}\right)  ^{\frac{p-q}{p}}\left(  \int_{\Omega}|\nabla V_q|^{p}\right)
^{\frac{q}{p}}=m^{\frac{q}{p}}\left(  \int_{\Omega}|\nabla u|^{p}\right)
^{\frac{p-q}{p}}.
\]
Note that $\frac{u}{u+\epsilon}\rightarrow\mathds{1}_{\Omega_{u}^{+}}$ as
$\epsilon\rightarrow0$. Thus, by Lebesgue's dominated convergence theorem and
the above inequalities, we have
\[
\int_{\Omega_{u}^{+}}a(x)|V_q|^{q}=\lim_{\epsilon\rightarrow0}\int_{\Omega
}a(x)|V_q|^{q}\left(  \frac{u}{u+\epsilon}\right)  ^{q-1}\leq m^{\frac{q}{p}%
}\left(  \int_{\Omega}|\nabla u|^{p}\right)  ^{\frac{p-q}{p}}.
\]
In addition, since $u>0$ in $\Omega_{a}^{+}$, we have $a\leq0$ in
$\Omega\setminus\Omega_{u}^{+}$, which implies that
\[
\int_{\Omega_{u}^{+}}a(x)|V_q|^{q}=1-\int_{\Omega\setminus\Omega_{u}^{+}%
}a(x)|V_q|^{q}\geq1,
\]
and therefore
\[
\left(  \int_{\Omega}|\nabla u|^{p}\right)  ^{\frac{q-p}{q}}\leq m.
\]
Now, since $u$ solves $(P_{q})$, we have $\int_{\Omega}|\nabla u|^{p}%
=\int_{\Omega}a(x)u^{q}$, so the latter inequality yields
\[
\int_{\Omega}|\nabla\tilde{u}|^{p}=\frac{\int_{\Omega}|\nabla u|^{p}}{\left(
\int_{\Omega}a(x)u^{q}\right)  ^{\frac{p}{q}}}\leq m,
\]
i.e. $\tilde{u}\equiv V_{q}$. By Lemma \ref{l1}-(4) and Corollary \ref{un}, we
conclude that $u\equiv U_{q}$.
\end{proof}

Next we prove a generalization of the uniqueness assertion in Proposition
\ref{us}. This result extends \cite[Theorem 2.1]{bandle} to $p\neq2$, without
requiring any smoothness condition on $\Omega_{a}^{+}$, nor the finiteness of
$\mathcal{J}$.

\begin{proposition}
\label{ru}Let $\{\Omega_{i}:i\in\mathcal{I}\}$ be the connected components of
$\Omega_{a}^{+}$, and $\mathcal{J}\subset\mathcal{I}$. Then $(P_{q})$ has at
most one solution such that $u>0$ in $\displaystyle\bigcup_{i\in\mathcal{J}%
}\Omega_{i}$ and $u\equiv0$ in $\displaystyle\bigcup_{i\in\mathcal{I}%
\setminus\mathcal{J}}\Omega_{i}$.
\end{proposition}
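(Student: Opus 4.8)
The plan is to adapt the Picone-type argument used in Proposition~\ref{us}, but now comparing two candidate solutions $u_1, u_2$ with the prescribed sign pattern, rather than comparing a solution with the minimizer $V_q$. Suppose $u_1$ and $u_2$ both solve $(P_q)$, are positive on $\bigcup_{i\in\mathcal J}\Omega_i$ and vanish identically on $\bigcup_{i\in\mathcal I\setminus\mathcal J}\Omega_i$. First I would fix $\epsilon>0$ and test the equation for $u_1$ with $\frac{u_2^q}{(u_1+\epsilon)^{q-1}}\in X$, and symmetrically test the equation for $u_2$ with $\frac{u_1^q}{(u_2+\epsilon)^{q-1}}$. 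Applying Lemma~\ref{lp} (with $u_j+\epsilon$ in place of $u$) to each, then letting $\epsilon\to0$ via Lebesgue dominated convergence exactly as in the proof of Proposition~\ref{us}, I expect to arrive at the two inequalities
\[
\int_{\Omega_{u_1}^+} a(x)\, u_2^{q-1}\, u_1^{?}\ \le\ \cdots,
\]
more precisely, after the $\epsilon\to 0$ passage,
\[
\int_{\Omega} a(x)\, u_2^{q}\,\mathds 1_{\Omega_{u_1}^+}\ \le\ \Big(\int_\Omega|\nabla u_1|^p\Big)^{\frac{p-q}{p}}\Big(\int_\Omega|\nabla u_2|^p\Big)^{\frac{q}{p}},
\]
and the symmetric inequality with the roles of $u_1$ and $u_2$ exchanged.

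The key structural observation is that the prescribed sign pattern forces $\Omega_{u_1}^+$ and $\Omega_{u_2}^+$ to contain the same components of $\Omega_a^+$, namely those indexed by $\mathcal J$, and to be disjoint from $\bigcup_{i\in\mathcal I\setminus\mathcal J}\Omega_i$; more importantly, on $\Omega\setminus\Omega_{u_j}^+$ we have $a\le 0$ on the part where $a$ could be positive, since $\Omega_a^+\setminus\Omega_{u_j}^+\subset\bigcup_{i\in\mathcal I\setminus\mathcal J}\Omega_i$ and $a|u_j|^q=0$ there. Hence, just as in Proposition~\ref{us}, I would bound $\int_{\Omega_{u_1}^+}a(x)u_2^q\ge\int_\Omega a(x)u_2^q=\int_\Omega|\nabla u_2|^p$ (using that $u_2$ solves $(P_q)$), and symmetrically. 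Writing $s_j:=\int_\Omega|\nabla u_j|^p>0$, the two inequalities collapse to $s_2\le s_1^{(p-q)/p}s_2^{q/p}$ and $s_1\le s_2^{(p-q)/p}s_1^{q/p}$, i.e.\ $s_2^{(p-q)/p}\le s_1^{(p-q)/p}$ and $s_1^{(p-q)/p}\le s_2^{(p-q)/p}$, forcing $s_1=s_2$. Then equality holds throughout, which by the strict-inequality clause in Young's inequality (inside Lemma~\ref{lp}) forces $\nabla(u_2/u_1)=0$ a.e.\ on $\Omega_{u_1}^+$, hence $u_2=c\,u_1$ on each component $\Omega_i$, $i\in\mathcal J$; combined with $s_1=s_2$ and the homogeneity, $c=1$, and since both vanish on the complementary components and (by the strong maximum principle, solutions being $C^{1,\alpha}$) agree wherever they are positive, I would conclude $u_1\equiv u_2$ on all of $\Omega$.

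The main obstacle I anticipate is the equality-case analysis: tracking exactly where the strict inequality in Lemma~\ref{in}/Lemma~\ref{lp} is lost and promoting "$\nabla(u_2/u_1)=0$ a.e.\ on $\Omega_{u_1}^+$" to "$u_1\equiv u_2$ on $\Omega$". The delicate point is that $u_1$ and $u_2$ may have nontrivial dead cores outside $\Omega_a^+$, so connectedness of $\Omega_a^+$'s components does not immediately propagate the identity $u_1=u_2$ to the dead-core region; I would handle this by noting that on $\Omega\setminus\overline{\bigcup_{i\in\mathcal J}\Omega_i}$ both functions satisfy $-\Delta_p w=a(x)w^{q-1}$ with $a\le 0$ there (up to the prescribed zero set) and the same boundary data coming from the shared values and normal derivatives on $\partial\Omega_i$, $i\in\mathcal J$, so a comparison/uniqueness argument for the $p$-Laplacian with a nonincreasing (in $w$) right-hand side finishes the job; alternatively, and more cleanly, one observes that $u_1$ and $u_2$ are both the $U_q$-type minimizer relative to $\bigcup_{i\in\mathcal J}\Omega_i$ and invoke Corollary~\ref{un} on that subdomain together with the vanishing elsewhere.
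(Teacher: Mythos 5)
Your route is genuinely different from the paper's, and it is worth noting what the paper's choice buys. The paper never compares two solutions directly: it introduces the restricted constrained problem $m_{j}:=\inf\{\int_{\Omega}|\nabla v|^{p}: v\in\mathcal{S}_{a},\ v\equiv 0\ \text{in}\ \bigcup_{i\in\mathcal{I}\setminus\mathcal{J}}\Omega_{i}\}$, shows by the hidden-convexity argument of Proposition \ref{um} (via Lemma \ref{in}) that $m_{j}$ has a \emph{unique} nonnegative minimizer $V_{j}$, and then runs the Picone computation of Proposition \ref{us} once, with $V_{j}$ in place of $V_{q}$, to conclude that any solution with the prescribed sign pattern satisfies $\int_{\Omega}|\nabla\tilde{u}|^{p}\le m_{j}$, hence $\tilde{u}\equiv V_{j}$. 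The identification of the solution is thus done by uniqueness of the constrained minimizer, so only the Picone \emph{inequality} is ever needed; no equality-case analysis occurs anywhere.

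Your symmetric D\'iaz--Saa-type comparison is a legitimate alternative, and its first half is correct: the two tested inequalities together with $\int_{\Omega_{u_1}^{+}}a\,u_2^{q}\ge\int_{\Omega}a\,u_2^{q}=\int_{\Omega}|\nabla u_2|^{p}$ (which does follow from the prescribed sign pattern) yield $s_1=s_2$. The genuine gap is the passage from $s_1=s_2$ to $u_1\equiv u_2$. The equality you obtain is an equality of integrals reached only in the limit $\epsilon\to 0$, while Lemma \ref{lp} is applied to $u_1+\epsilon$; moreover Lemma \ref{lp} as stated has no strict-inequality clause, so "equality in Young forces $\nabla(u_2/u_1)=0$" is not something you can simply quote. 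To make it work you would have to show that the nonnegative Picone deficits $R_{\epsilon}:=|\nabla u_1|^{p-q}|\nabla u_2|^{q}-|\nabla u_1|^{p-2}\nabla u_1\nabla\bigl(u_2^{q}/(u_1+\epsilon)^{q-1}\bigr)$ satisfy $\int_{\Omega}R_{\epsilon}\to 0$, extract an a.e.\ convergent subsequence, identify the pointwise limit on $\Omega_{u_1}^{+}$, and then handle the equality cases of both Cauchy--Schwarz and Young; none of this is carried out. Even then you only get that $u_2/u_1$ is locally constant on $\Omega_{u_1}^{+}$, whose connected components need not be the $\Omega_{i}$, so ruling out different constants on different components requires the Hölder equality case ($|\nabla u_1|=|\nabla u_2|$ a.e., using $s_1=s_2$), which you do not invoke. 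Finally, your two fixes for the dead-core region are respectively incomplete (the comparison argument on $\Omega\setminus\overline{\bigcup_{i\in\mathcal{J}}\Omega_{i}}$ presupposes shared boundary data you have not established) and essentially circular (invoking a Corollary \ref{un}-type statement "on that subdomain" is the assertion being proved). In fact the dead core is a red herring: once $u_1\equiv u_2$ on $\Omega_{u_1}^{+}$ and, symmetrically, on $\Omega_{u_2}^{+}$, both functions vanish off $\Omega_{u_1}^{+}=\Omega_{u_2}^{+}$ and you are done; the real missing work is the equality-case extraction.
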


\begin{proof} Set $ m_{j}:=\inf\left\{  \int_{\Omega}|\nabla
v|^{p}:v\in\mathcal{S}_{a}\text{ and }v\equiv0\mbox{ in }\bigcup
_{i\in\mathcal{I}\setminus\mathcal{J}}\Omega_{i}\right\}  .$ Arguing as in
Proposition \ref{um}, we can show that $m_{j}$ is achieved by a unique
$V_{j}\geq0$. Repeating the proof of Proposition \ref{us}, with $V_{j}$
instead of $V$, we obtain
\[
\int_{\Omega_{u}^{+}}a(x)|V_{j}|^{q}=\lim_{\epsilon\rightarrow0}\int_{\Omega
}a(x)|V_{j}|^{q}\left(  \frac{u}{u+\epsilon}\right)  ^{q-1}\leq m_{j}%
^{\frac{q}{p}}\left(  \int_{\Omega}|\nabla u|^{p}\right)  ^{\frac{p-q}{p}}.
\]
Now, since $V_{j}=0$ in $\displaystyle\bigcup_{i\in\mathcal{I}\setminus
\mathcal{J}}\Omega_{i}$, we have $a(x)V_{j}^{q}\leq0$ in $\Omega
\setminus\Omega_{u}^{+}$, so that
\[
1=\int_{\Omega}a(x)|V_{j}|^{q}=\int_{\Omega_{u}^{+}}a(x)|V_{j}|^{q}%
+\int_{\Omega\setminus\Omega_{u}^{+}}a(x)|V_{j}|^{q}\leq\int_{\Omega_{u}^{+}%
}a(x)|V_{j}|^{q}.
\]
The rest of the argument yields that $\int_{\Omega}|\nabla\tilde{u}|^{p}\leq
m_{j}$, i.e. $\tilde{u}\equiv V_{j}$.
\end{proof}

The existence of solutions as the ones in the aforementioned proposition is a
more delicate issue that requires some conditions on $a$ and $q$ allowing dead
cores formation in $(P_{q})$. When $p=2$, we know that these solutions
\textit{do not} exist for $q$ close enough to $p$, cf. \cite{KRQU16}%
.\smallskip

We prove now that minimizers of $I_{q}$ do not change sign when $q$ is close
to $p$. Recall that 
\[
\lambda_{1}(a)=\min\left\{  \int_{\Omega}|\nabla v|^{p}:v\in X,\int_{\Omega
}a(x)|v|^{p}=1\right\}
\]
is the first positive eigenvalue of
\[
\left\{
\begin{array}
[c]{lll}%
-\Delta_{p}u=\lambda a(x)|u|^{p-2}u & \mathrm{in} & \Omega,\\
\mathbf{B}u=0 & \mathrm{on} & \partial\Omega.
\end{array}
\right.
\]
We denote by $\phi_{1}(a)$ a positive eigenfunction associated to $\lambda
_{1}(a)$. By the strong maximum principle, we have $\phi_{1}(a)\in
\mathcal{P}^{\circ}$, where
\[
\mathcal{P}^{\circ}:=\left\{
\begin{array}
[c]{ll}%
\left\{  u\in C_{0}^{1}(\overline{\Omega}):u>0\ \mbox{in $\Omega$},\ \partial
_{\nu}u<0\ \mbox{on $\partial \Omega$}\right\}   &
\mbox{if ${\bf B }u= u$},\medskip\\
\left\{  u\in C^{1}(\overline{\Omega}%
):u>0\ \mbox{on $\overline{\Omega}$}\right\}   &
\mbox{if ${\bf B }u=\partial_\nu u$}.
\end{array}
\right.
\]

\begin{proposition}
\label{cs} There exists $q_{0}=q_{0}(a)\in(1,p)$ such that $U_{q}%
\in\mathcal{P}^{\circ}$ for $q\in(q_{0},p)$. In particular, any minimizer of
$I_{q}$ has constant sign for $q\in(q_{0},p)$.
\end{proposition}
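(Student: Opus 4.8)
The plan is to study the behaviour of $V_{q}$ as $q\to p^{-}$ and to show that it converges in $C^{1}(\overline{\Omega})$ to the positive eigenfunction $\phi_{1}(a)$, normalized so that $\int_{\Omega}a\,\phi_{1}(a)^{p}=1$. Since $\mathcal{P}^{\circ}$ is open in $C_{0}^{1}(\overline{\Omega})$ (Dirichlet case) or in $C^{1}(\overline{\Omega})$ (Neumann case) and $\phi_{1}(a)\in\mathcal{P}^{\circ}$, this yields $V_{q}\in\mathcal{P}^{\circ}$ for $q$ close to $p$; then, by Lemma \ref{l1}-(4) and Corollary \ref{un}, $U_{q}=C_{q}V_{q}$ for some $C_{q}>0$, so $U_{q}\in\mathcal{P}^{\circ}$ as well, because $\mathcal{P}^{\circ}$ is a cone (we need not track $C_{q}$). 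The last assertion then follows from the uniqueness in Corollary \ref{un}: if $U$ is any minimizer of $I_{q}$ for such $q$, then $|U|$ is also a minimizer, hence $|U|\equiv U_{q}>0$ in $\Omega$; since $U\in C^{1}(\overline{\Omega})$ and $\Omega$ is connected, $U$ never vanishes in $\Omega$ and therefore has constant sign.

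To carry this out I would first obtain uniform bounds. Testing the definition of $m=m_{q}$ with the rescaled eigenfunction $t_{q}\phi_{1}(a)\in\mathcal{S}_{a}$, where $t_{q}^{q}\int_{\Omega}a\,\phi_{1}(a)^{q}=1$, gives $m_{q}\le t_{q}^{p}\lambda_{1}(a)$, and since $t_{q}\to 1$ as $q\to p$ by dominated convergence, $\limsup_{q\to p}m_{q}\le\lambda_{1}(a)$; in particular $\int_{\Omega}|\nabla V_{q}|^{p}=m_{q}$ stays bounded. In the Dirichlet case this already bounds $(V_{q})$ in $X$; in the Neumann case one additionally checks that the averages $c_{q}:=|\Omega|^{-1}\int_{\Omega}V_{q}$ are bounded (otherwise, by the Poincar\'e--Wirtinger inequality, $V_{q}/c_{q}\to 1$ in $L^{q}(\Omega)$, whence $0=c_{q}^{-q}\int_{\Omega}a\,V_{q}^{q}=\int_{\Omega}a(V_{q}/c_{q})^{q}\to\int_{\Omega}a<0$, a contradiction), so again $(V_{q})$ is bounded in $X$. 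The Euler--Lagrange equation for $V_{q}$ reads $-\Delta_{p}V_{q}=m_{q}\,a\,V_{q}^{q-1}$ in the weak sense, the Lagrange multiplier being $m_{q}$ (test with $V_{q}$). Since $q-1<p-1<p^{\ast}-1$, Moser iteration bounds $(V_{q})$ in $L^{\infty}(\Omega)$ uniformly, and then the $C^{1,\alpha}$ estimates of \cite{db,L} bound $(V_{q})$ in $C^{1,\alpha}(\overline{\Omega})$ for some $\alpha\in(0,1)$, uniformly for $q$ in a left neighbourhood of $p$; in particular $(V_{q})$ is precompact in $C^{1}(\overline{\Omega})$.

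Next I would identify the limit. Let $q_{n}\to p^{-}$ with $V_{q_{n}}\to V_{\ast}$ in $C^{1}(\overline{\Omega})$ and weakly in $X$. Using the uniform $L^{\infty}$ bound together with the uniform convergence, one gets $V_{q_{n}}^{q_{n}}\to V_{\ast}^{p}$ uniformly, hence $\int_{\Omega}a\,V_{\ast}^{p}=\lim_{n}\int_{\Omega}a\,V_{q_{n}}^{q_{n}}=1$; in particular $V_{\ast}\not\equiv 0$, and $V_{\ast}$ is non-constant in the Neumann case. By weak lower semicontinuity and the bound above, $\int_{\Omega}|\nabla V_{\ast}|^{p}\le\liminf_{n}m_{q_{n}}\le\lambda_{1}(a)$; since $V_{\ast}$ lies in the constraint set defining $\lambda_{1}(a)$ it is actually a nonnegative minimizer there, hence, by the simplicity of the positive principal eigenvalue $\lambda_{1}(a)$ together with $V_{\ast}\ge 0$ and the normalization $\int_{\Omega}a\,V_{\ast}^{p}=1$, we get $V_{\ast}=\phi_{1}(a)$. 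As the limit is the same along every such sequence, $V_{q}\to\phi_{1}(a)$ in $C^{1}(\overline{\Omega})$ as $q\to p^{-}$, which completes the argument described in the first paragraph.

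The main obstacle is the uniform regularity in the second step: one has to ensure that the constants in the $L^{\infty}$ bound and in the $C^{1,\alpha}$ estimates of \cite{db,L} do not blow up as $q\to p$, and that the moving exponent $q_{n}$ causes no trouble when passing the constraint $\int_{\Omega}a\,V_{q_{n}}^{q_{n}}=1$ to the limit — here $p$ is fixed and $q$ ranges over a compact subinterval of $(1,p)$, which keeps both issues under control — since it is precisely $C^{1}$, and not merely $W^{1,p}$, convergence that excludes dead cores and places $V_{q}$ inside the open set $\mathcal{P}^{\circ}$.
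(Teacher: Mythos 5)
Your proposal is correct, but it takes a genuinely different route from the paper's. You track the constrained minimizers $V_{q}$ and show $V_{q}\to\phi_{1}(a)$ in $C^{1}(\overline{\Omega})$ as $q\to p^{-}$, which places $V_{q}$ (hence $U_{q}=C_{q}V_{q}$, $\mathcal{P}^{\circ}$ being an open cone) in $\mathcal{P}^{\circ}$ for $q$ near $p$. The paper instead argues by contradiction directly on a sequence $u_{n}=U_{q_{n}}\notin\mathcal{P}^{\circ}$ with $q_{n}\to p^{-}$, after first rescaling $a$ so that $\lambda_{1}(a)<1$; it must then split into the cases $(u_{n})$ bounded or unbounded in $X$: the bounded case is excluded because the limit would force $\lambda_{1}(a)=1$, and in the unbounded case the renormalized sequence $u_{n}/\Vert u_{n}\Vert$ converges in $C^{1}(\overline{\Omega})$ to a principal eigenfunction, yielding the same contradiction your argument reaches. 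Your route avoids both the rescaling of $a$ and the case distinction, because $\int_{\Omega}|\nabla V_{q}|^{p}=m_{q}\le t_{q}^{p}\lambda_{1}(a)$ is bounded for free; the (small) price is the appeal to the simplicity of $\lambda_{1}(a)$ (from \cite{KLP}) to pin down the limit as $\phi_{1}(a)$ --- and even that is avoidable, since it suffices that every subsequential $C^{1}$-limit be a nonnegative nontrivial eigenfunction, hence in $\mathcal{P}^{\circ}$ by the strong maximum principle. Both proofs rest on the same nontrivial ingredient, namely $C^{1,\alpha}$ estimates uniform as the exponent varies in a left neighbourhood of $p$, which you correctly identify as the main technical point; the paper uses exactly the same fact when it asserts $v_{n}\to v_{0}$ in $C^{1}(\overline{\Omega})$ ``by elliptic regularity''. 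What the paper's formulation buys in exchange for the extra case analysis is the asymptotic information on $\Vert U_{q}\Vert_{\infty}$ recorded in the remark following the proposition (blow-up if $\lambda_{1}(a)<1$, decay if $\lambda_{1}(a)>1$), which your normalized approach does not see. Your derivation of the final assertion (any minimizer has constant sign) coincides with the paper's.
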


\begin{proof}
Since $\lambda_{1}(ca)=c^{-1}\lambda_{1}(a)$ for any $c>0$,
and $u$ solves $(P_{q})$ if, and only if, $c^{\frac{1}{p-q}}u$ solves $(P_{q})$
with $a$ replaced by $ca$, we can assume without loss of generality that
$\lambda_{1}(a)<1$. Assume by contradiction that there exists a sequence
$q_{n}\rightarrow p^{-}$ with $u_{n}\not \in \mathcal{P}^{\circ}$, where
$u_{n}:=U_{q_{n}}$. First we assume that $(u_{n})$ is bounded in $X$, so that,
up to a subsequence, $u_{n}\rightharpoonup u_{0}$ in $X$, $u_{n}\rightarrow
u_{0}$ in $L^{t}(\Omega)$ with $t\in(1,p^{\ast})$, and $u_{n}\rightarrow
u_{0}$ \textit{a.e.} in $\Omega$, for some $u_{0}\in X$. It follows that
$u_{0}\geq0$ and
\begin{align*}
\frac{1}{p}\int_{\Omega}\left(  |\nabla u_{0}|^{p}-a(x)|u_{0}|^{p}\right)   &
\leq\liminf I_{q_{n}}(u_{n})\leq\liminf I_{q_{n}}(\phi_{1})\\
&  =\frac{1}{p}\int_{\Omega}\left(  |\nabla\phi_{1}(a)|^{p}-a(x)\phi
_{1}(a)^{p}\right) \\
&  <\frac{1}{p}\int_{\Omega}\left(  |\nabla\phi_{1}(a)|^{p}-\lambda
_{1}(a)a(x)\phi_{1}(a)^{p}\right)  =0,
\end{align*}
which shows that $u_{0}\not \equiv 0$. Moreover, one can easily see that
$u_{n}\rightarrow u_{0}$ in $X$ and $u_{0}$ solves $-\Delta_{p}u_{0}%
=a(x)u_{0}^{p-1}$ in $\Omega$. Since $\int_{\Omega}au_{n}^{q_{n}}>0$, we have
$\int_{\Omega}au_{0}^{p}>0$, and consequently $\lambda_{1}(a)=1$, a
contradiction. Thus $(u_{n})$ is unbounded in $X$. We can assume
that
\[
\left\Vert u_{n}\right\Vert \rightarrow\infty,\quad v_{n}:=\frac{u_{n}%
}{\left\Vert u_{n}\right\Vert }\rightharpoonup v_{0}\text{ in }X,\quad
v_{n}\rightarrow v_{0}\text{ in }L^{t}(\Omega)\text{ }\mbox{with }t\in
(1,p^{\ast}),
\]
for some $v_{0}\in X$. Note that $v_{n}$ satisfies
\begin{equation}
-\Delta_{p}v_{n}=a(x)\frac{v_{n}^{q_{n}-1}}{\left\Vert u_{n}\right\Vert
^{p-q_{n}}},\quad v_{n}\geq0,\quad v_{n}\in X. \label{ev}%
\end{equation}
Since $\left\Vert u_{n}\right\Vert \geq1$ for $n$ large enough, we have either
$\left\Vert u_{n}\right\Vert ^{p-q_{n}}\rightarrow\infty$ or $\left\Vert
u_{n}\right\Vert ^{p-q_{n}}$ is bounded. In the first case, from \eqref{ev} we
have
\[
\int_{\Omega}|\nabla v_{n}|^{p}=\frac{\int_{\Omega}a(x)v_{n}^{q_{n}}%
}{\left\Vert u_{n}\right\Vert ^{p-q_{n}}}\rightarrow0,
\]
which is a contradiction. Now, if $\left\Vert u_{n}\right\Vert ^{p-q_{n}}$ is
bounded then we can assume that $\left\Vert u_{n}\right\Vert ^{p-q_{n}%
}\rightarrow d\geq1$. From \eqref{ev}, we obtain
\[
\int_{\Omega}|\nabla v_{0}|^{p-2}\nabla v_{0}\nabla\phi=\frac{1}{d}%
\int_{\Omega}a(x)v_{0}^{p-1}\phi,\quad\forall\phi\in X,
\]
i.e.
\[
-\Delta_{p}v_{0}=\frac{1}{d}a(x)v_{0}^{p-1}\quad\text{in }\Omega,\quad
v_{0}\in X.
\]
In addition, $v_{n}\rightarrow v_{0}$ in $X$, so that $v_{0}\not \equiv 0$ and
$v_{0}\geq0$ (which implies that $\lambda_1(a)=d^{-1}$). By the strong maximum principle, we deduce that $v_{0}%
\in\mathcal{P}^{\circ}$. Finally, by elliptic regularity, we find that
$v_{n}\rightarrow v_{0}$ in $C^{1}(\overline{\Omega})$. Consequently $v_{n}%
\in\mathcal{P}^{\circ}$ for $n$ large enough, which contradicts $u_{n}%
\not \in \mathcal{P}^{\circ}$. Therefore there exists $q_0(a)=q_{0}\in(1,p)$ such
that $U_{q}\in\mathcal{P}^{\circ}$ for $q\in(q_{0},p)$, which shows in
particular that any minimizer of $I_{q}$ has constant sign for such $q$.
\end{proof}

\begin{remark}
\strut

\begin{enumerate}
\item In the Dirichlet case, Proposition \ref{cs} can be extended as follows:
given $q \in(1,p)$ and $a^{+}$ fixed, there exists $\delta>0$ such that $U_{a}
\in\mathcal{P}^{\circ}$ if $\|a^{-}\|_{\infty}<\delta$, where $U_{a}$ is the
unique nonnegative minimizer of
\[
I_{a}(u)=\int_{\Omega}\left(  \frac{1}{p}|\nabla u|^{p} - \frac{1}{q}
a(x)|u|^{q}\right)  ,
\]
defined on $W_{0}^{1,p}(\Omega)$. In particular, minimizers of $I_{a}$ have
constant sign if $\|a^{-}\|_{\infty}<\delta$. Indeed, assume that $a_{n}=a^{+}
-a_{n}^{-}$, with $a_{n}^{-} \to0$ in $C(\overline{\Omega})$, and let
$u_{n}:=U_{a_{n}}$. Then $(u_{n})$ is bounded in $W_{0}^{1,p}(\Omega)$, since
\[
\int_{\Omega}|\nabla u_{n}|^{p} \leq\int_{\Omega}a^{+}(x) |u_{n}|^{q}.
\]
One can show then that $u_{n} \to u_{0}$ in $C^{1}(\overline{\Omega})$, and
$u_{0}\geq0$ solves $-\Delta_{p} u =a^{+}(x)u^{q-1}$. Moreover $u_{0}
\not \equiv 0$ since
\[
I_{a^{+}}(u_{0})=\lim I_{a_{n}}(u_{n}) \leq\lim I_{a_{n}}(u_{+})=I_{a^{+}%
}(u_{+})<0,
\]
where $u_{+}$ is the nonnegative minimizer of $I_{a^{+}}$. Thus $u_{0}
\in\mathcal{P}^{\circ}$, which yields a contradiction.

\item The proof of Proposition \ref{cs} also shows that $U_{q}$ has the
following asymptotic behavior as $q \to p^{-}$:

\begin{itemize}
\item $\|U_{q}\|_{\infty}\to\infty$ if $\lambda_{1}(a)<1$.

\item $U_{q} \to0$ in $C^{1}(\overline{\Omega})$ if $\lambda_{1}(a)>1$.
\end{itemize}

This fact has been observed for $p=2$ in \cite{KRQU3}.
\end{enumerate}
\end{remark}



\begin{thebibliography}{99}                                                                                               %




\bibitem {AH}W. Allegretto, Y.X. Huang, \textit{A Picone's identity for the
p-Laplacian and applications.} Nonlinear Anal. \textbf{32} (1998), 819--830.





\bibitem {bandle}C. Bandle, M. Pozio, A. Tesei, \textit{The asymptotic
behavior of the solutions of degenerate parabolic equations,} Trans. Amer.
Math. Soc. \textbf{303} (1987), 487--501.\textit{\ }

\bibitem {BPT}C.\ Bandle, M.\ Pozio, A.\ Tesei, \textit{Existence and
uniqueness of solutions of nonlinear Neumann problems},
Math.\ Z.\ \textbf{199} (1988), 257--278.

\bibitem {belloni}M. Belloni, {B.\ Kawohl, }\textit{A direct uniqueness proof
for equations involving the }$p$\textit{-Laplace operator}, Manuscripta Math.
\textbf{109} (2002), 229--231.

\bibitem {BGH}D. Bonheure, J. M. Gomes, P. Habets, \textit{ Multiple positive
solutions of superlinear elliptic problems with sign-changing weight}, J.
Differential Equations \textbf{214} (2005), 36--64.

\bibitem {BF}L. Brasco, G. Franzina, \textit{ Convexity properties of
Dirichlet integrals and Picone-type inequalities}, Kodai Math. J. \textbf{37}
(2014), 769--799.

\bibitem {BO}H. Brezis, L. Oswald, \textit{ Remarks on sublinear elliptic
equations,} Nonlinear Anal. \textbf{10} (1986), 55--64.



\bibitem {DS}M. Delgado, A. Su\'{a}rez, \textit{On the uniqueness of positive
solution of an elliptic equation}, Appl. Math. Lett. \textbf{18} (2005), 1089--1093.

\bibitem {DS}J.I. D\'{\i}az, J.E. Saa, \textit{Existence et unicit\'{e}\ de
solutions positives pour certaines \'{e}quations elliptiques
quasilin\'{e}aires. (French) [Existence and uniqueness of positive solutions
of some quasilinear elliptic equations] }C. R. Acad. Sci. Paris S\'{e}r. I
Math. \textbf{305} (1987), 521--524.









\bibitem {db}E. DiBenedetto, \textit{$C^{1+\alpha}$ local regularity of weak
solutions of degenerate elliptic equations}, Nonlinear Anal. \textbf{7}
(1983), 827--850.




\bibitem {drabek}P. Dr\'{a}bek, J. Hern\'{a}ndez, \textit{Existence and
uniqueness of positive solutions for some quasilinear elliptic problems},
Nonlinear Anal. \textbf{44} (2001), 189--204.

\bibitem {ans}T. Godoy, U. Kaufmann, \textit{Existence of strictly positive
solutions for sublinear elliptic problems in bounded domains}, Adv. Nonlinear
Stud. \textbf{14} (2014), 353--359.

\bibitem {bams}U. Kaufmann, I. Medri, \textit{Strictly positive solutions for
one-dimensional nonlinear problems involving the p-Laplacian}, Bull. Aust.
Math. Soc. \textbf{89} (2014), 243--251.







\bibitem {KRQU16}{U.\ Kaufmann, H.\ Ramos Quoirin, K.\ Umezu,
\textit{Positivity results for indefinite sublinear elliptic problems via a
continuity argument}, J.\ Differential Equations \textbf{263} (2017),
4481--4502. }

\bibitem {KRQU3}U.\ Kaufmann, H.\ Ramos Quoirin, K.\ Umezu, \textit{A curve of
positive solutions for an indefinite sublinear Dirichlet problem}, Discrete
Contin. Dyn. Syst.\ \textbf{40} (2020), 617--645.

\bibitem {KLP}{B.\ Kawohl, M.\ Lucia, S.\ Prashanth, \textit{Simplicity of the
principal eigenvalue for indefinite quasilinear problems}, Adv. Differential
Equations, \textbf{12} (2007), 407--434. }

\bibitem {L}G. M. Lieberman, \textit{Boundary regularity for solutions of
degenerate elliptic equations}, Nonlinear Anal. \textbf{12} (1988),
1203--1219.








\bibitem {Va}J.L. V\'{a}zquez, A strong maximum principle for some quasilinear
elliptic equations. \textit{Appl. Math. Optim}\emph{.} \textbf{12} (1984), 191--202.
\end{thebibliography}
\end{document}